\documentclass[12pt]{amsart}
\usepackage{amsmath,amssymb,amsthm,amsrefs}
\usepackage{enumitem}
\usepackage{color}
\hoffset -1.5cm
\voffset -1cm
\textwidth 15.5truecm
\textheight 22.5truecm
\theoremstyle{plain}
\newtheorem{theorem}{Theorem}[section]

\newtheorem{corollary}[theorem]{Corollary}
\newtheorem{lemma}[theorem]{Lemma}
\theoremstyle{definition}

\newtheorem{example}[theorem]{Example}

\newtheorem*{problem}{Inverse Spectral Problem}

\makeatletter
\@addtoreset{equation}{section}
\makeatother

\makeatletter
\newcommand{\Spvek}[2][r]{%
  \gdef\@VORNE{1}
  \left(\hskip-\arraycolsep%
    \begin{array}{#1}\vekSp@lten{#2}\end{array}%
  \hskip-\arraycolsep\right)}

\def\vekSp@lten#1{\xvekSp@lten#1;vekL@stLine;}
\def\vekL@stLine{vekL@stLine}
\def\xvekSp@lten#1;{\def\temp{#1}%
  \ifx\temp\vekL@stLine
  \else
    \ifnum\@VORNE=1\gdef\@VORNE{0}
    \else\@arraycr\fi%
    #1%
    \expandafter\xvekSp@lten
  \fi}
\makeatother
\numberwithin{equation}{section}

\begin{document}
\title[Ambarzumian-type problems for discrete Schr\"{o}dinger operators]
{Ambarzumian-type problems for discrete Schr\"{o}dinger operators\\}

\author[Eakins]{Jerik Eakins}\address{Department of Mathematics, Texas A{\&}M University, College Station, TX 77843, U.S.A.}\email{jerikeakins@tamu.edu}
\author[Frendreiss]{William Frendreiss}\address{Department of Mathematics, Texas A{\&}M University, College Station, TX 77843, U.S.A.}\email{wfrendreiss@tamu.edu}
\author[Hat\.{i}no\u{g}lu]{Burak Hat\.{i}no\u{g}lu}\address{Department of Mathematics, UC Santa Cruz, Santa Cruz, CA 95064, U.S.A.}\email{bhatinog@ucsc.edu}
\author[Lamb]{Lucille Lamb}\address{Department of Mathematics, Texas A{\&}M University, College Station, TX 77843, U.S.A.}\email{lucille.lamb@tamu.edu}
\author[Manage]{Sithija Manage}\address{Department of Mathematics, Texas A{\&}M University, College Station, TX 77843, U.S.A.}\email{sithijamanage@tamu.edu}
\author[Puente]{Alejandra Puente}\address{Department of Mathematics, Texas A{\&}M University, College Station, TX 77843, U.S.A.}\email{alejandra.puente@tamu.edu}


\keywords{inverse spectral theory, discrete Schr\"{o}dinger operators, Jacobi matrices, Ambarzumian-type problems}

\begin{abstract}
We discuss the problem of unique determination of the finite free discrete Schr\"{o}dinger operator from its spectrum, also known as Ambarzumian problem, with various boundary conditions, namely any real constant boundary condition at zero and Floquet boundary conditions of any angle. Then we prove the following Ambarzumian-type mixed inverse spectral problem: Diagonal entries except the first and second ones and a set of two consecutive eigenvalues uniquely determine the finite free discrete Schr\"{o}dinger operator.
\end{abstract}
\maketitle

\section{\bf {Introduction}}

The Jacobi matrix is a three-diagonal matrix defined as
\begin{equation*}
 \begin{pmatrix}
b_1 & a_1 & 0 & 0 & 0 \\
a_1 & b_2 & a_2 & \ddots & 0 \\
0  &  a_2 & b_3 & \ddots & 0 \\
0 & \ddots & \ddots & \ddots & a_{n-1} \\
0 & \dots & 0 & a_{n-1} & b_n 
\end{pmatrix}
\end{equation*}
where $n \in \mathbb{N}$, $a_k ~\textgreater~ 0$ for any $k \in \{1,2,\dots,n-1\}$ and $b_k \in \mathbb{R}$ for any $k \in \{1,2,\dots,n\}$. When $a_k=1$ for each $k \in \{1,2,\dots,n-1\}$, this matrix defines the finite discrete Schr\"{o}dinger operator.

Direct spectral problems aim to get spectral information from the sequences $\{a_k\}_{k=1}^{n-1}$ and $\{b_k\}_{k=1}^{n}$. In inverse spectral problems one tries to recover these sequences from spectral information such as the spectrum, the spectral measure or Weyl $m$-function.


Early inverse spectral problems for finite Jacobi matrices appear as discrete analogs of inverse spectral problems for the Schr\"{o}dinger (Sturm-Liouville) equations
\begin{equation*}
 -u''(t) + q(t)u(t) = zu(t),
\end{equation*}
on the interval $[0,\pi]$ with the boundary conditions 
\begin{align*}
  &u(0)\cos\alpha - u'(0)\sin\alpha = 0\\
 &u(\pi)\cos\beta + u'(\pi)\sin\beta = 0,
 \end{align*}
where the potential function $q\in L^1(0,\pi)$ is real-valued and $\alpha, \beta \in [0,\pi)$.

The first inverse spectral result on Schr\"{o}dinger operators is given by Ambarzumian \cite{AMB}. He considered continuous potential with Neumann boundary conditions at both endpoints ($\alpha = \beta = \pi/2$) and showed that $q \equiv 0$ if the spectrum consists of squares of integers. Later Borg \cite{BOR} realized that knowledge of one spectrum is sufficient for unique recovery only for the zero potential. He proved that an $L^1$-potential is uniquely recovered from two spectra, which share the same boundary condition at $\pi$ ($\beta_1=\beta_2$) and one of which is with Dirichlet boundary condition at $0$ ($\alpha_1=0,$ $\alpha_2 \in (0,\pi)$). A few years later, Levinson \cite{LEVI} removed the Dirichlet boundary condition restriction from Borg's result. This famous theorem is also known as two-spectra theorem. Then Marchenko \cite{MAR} observed that the spectral measure (or Weyl-Titchmarsh $m$-function) 
uniquely recovers an $L^1$-potential. Another classical result is due to Hochstadt and Liebermann \cite{HL}, which says that if the first half 
of an $L^1$-potential is known, one spectrum recovers the whole. One can find the statements of these classical theorems and some other 
results from the inverse spectral theory of Schr\"{o}dinger operators e.g. in \cite{HAT} and references therein. 

Finite Jacobi matrix analogs of Borg's and Hochstadt and Lieberman's theorems were considered by Hochstadt \cite{HOC,HOC2,HOC3}, where the potential 
$q$ is replaced by the sequences $\{a_k\}_{k=1}^{n-1}$ and $\{b_k\}_{k=1}^{n}$. These classical theorems led to various other inverse spectral results on finite Jacobi matrices (see \cite{BD,DK,GES,GS,S,WW} and references therein) and other settings such as semi-infinite, infinite, generalized Jacobi matrices and matrix-valued Jacobi operators (see e.g. \cite{CGR,D,D2,DKS2,DKS3,DKS4,DS,GKM,GKT,HAT2,SW,SW2,TES2} and references therein). In general, these problems can be divided into two groups. In Borg-Marchenko-type spectral problems, one tries to recover the sequences $\{a_k\}_{k=1}^{n-1}$ and $\{b_k\}_{k=1}^{n}$ from the spectral data. On the other hand, Hochstadt-Lieberman-type (or mixed) spectral problems recover the sequences $\{a_k\}_{k=1}^{n-1}$ and $\{b_k\}_{k=1}^{n}$ using a mixture of partial information on these sequences and the spectral data. 

Ambarzumian-type problems focus on inverse spectral problems for free discrete Schr\"{o}dinger operators, i.e. $a_k = 1$ and $b_k = 0$ for every $k$, or similar cases when $b_k = 0$ for some $k$. In this paper, we first revisit the classical Ambarzumian problem for the finite discrete Sch\"{o}dinger operator in Theorem \ref{Amb1}, which says that the spectrum of the free operator uniquely determines the operator. Then we provide a counter-example, Example \ref{exmp}, which shows that knowledge of the spectrum of the free operator with a non-zero boundary condition is not sufficient for unique recovery. In Theorem \ref{nzbc}, we observe that a non-zero boundary condition along with the corresponding spectrum of the free operator is needed for the uniqueness result. However, in Theorem \ref{Amb2} we prove that for the free operator with Floquet boundary conditions, the set of eigenvalues including multiplicities is sufficient to get uniqueness up to transpose.   

We also answer the following mixed Ambarzumian-type inverse problem positively in Theorem \ref{Amb3}.
\begin{problem}
Let us define the discrete Schr\"{o}dinger matrix \textbf{S}$_n$ as $a_k = 1$ for $k \in \{1,\dots,n-1\}$ and $b_1,b_2 \in \mathbb{R}$, $b_k = 0$ for $k \in \{3,\dots,n\}$. Let us also denote the free discrete Schr\"{o}dinger operator by \textbf{F}$_n$, which is defined as $a_k = 1$ for $k \in \{1,\dots,n-1\}$ and $b_k = 0$ for $k \in \{1,\dots,n\}$. If \textbf{S}$_n$ and \textbf{F}$_n$ share two consecutive eigenvalues, then do we get $b_1 = b_2 = 0$, i.e. \textbf{S}$_n = \textnormal{\textbf{F}}_n$?
\end{problem}
The paper is organized as follows. In Section 2 we recall necessary definitions and results we use in our proofs. In Section 3 we consider the problem of unique determination of the finite free discrete Schr\"{o}dinger operator from its spectrum, with various boundary conditions, namely any real constant boundary condition at zero, and Floquet boundary conditions of any angle. In Section 4 we prove the above mentioned Ambarzumian-type mixed inverse spectral problem. 

\section{\bf Preliminaries}
Let us start by fixing our notation.
Let \textbf{J}$_n$ represent the finite Jacobi matrix of size $n \times n$
\begin{equation}\label{Jacmat1}
 \textbf{J}_n := \begin{pmatrix}
    b_{1} & a_{1} & {0} & \cdots  & {0} \\
    a_{1} & b_{2} & a_{2} & \ddots  & \vdots \\
    {0} & a_{2} & b_{3} & \ddots  & {0} \\
    \vdots & \ddots & \ddots & \ddots & a_{n-1} \\
    {0} & \cdots & {0} & a_{n-1}  & b_{n} \\ 
\end{pmatrix},\\   
\end{equation}
where $a_k>0,$ $b_k\in \mathbb{R}$. Given \textbf{J}$_n$, let us consider the Jacobi matrix where all $a_k$'s and $b_k$'s are the same as \textbf{J}$_n$ except $b_1$ and $b_n$ are replaced by $b_1+b$ and $b_n+B$ respectively for $b,B \in \mathbb{R}$, i.e.
\begin{equation}\label{JacmatBC}
\textbf{J}_n + b(\delta_1,\cdot)\delta_1+B(\delta_n,\cdot)\delta_n.
\end{equation} 
The Jacobi matrix (\ref{JacmatBC}) is given by the Jacobi difference expression
\begin{equation*}
 a_{k-1}f_{k-1} + b_kf_k + a_kf_{k+1}, \quad \quad k \in \{1,\cdots,n\}
\end{equation*}
with the boundary conditions 
$$
f_0 = bf_1  \qquad \text{and} \qquad f_{n+1} = Bf_n.
$$ 
Let us note that we assume $a_0 = 1$ and $a_n = 1$.

In order to get a unique Jacobi difference expression with boundary conditions for a given Jacobi matrix, we can see the first and the last diagonal entries of the matrix \textbf{J}$_n$, defined in (\ref{Jacmat1}), as the boundary conditions at $0$ and $n+1$ respectively. Therefore let \textbf{J}$_n(b,B)$ denote the Jacobi matrix \textbf{J}$_n$ satisfying $b_1=b$ and $b_n=B$. 

If we consider the Jacobi difference expression with the Floquet boundary conditions 
$$
f_0 = f_ne^{2\pi i\theta} \qquad \text{and} \qquad f_{n+1}=f_1e^{-2\pi i\theta}, \qquad \theta \in [0,\pi)
$$
then we get the following matrix representation, which we denote by \textbf{J}$_n(\theta)$.
\begin{equation}\label{Jacmat}
 \textbf{J}_n(\theta) := \begin{pmatrix}
    b_{1} & a_{1} & {0} & \cdots  & {e^{2\pi i\theta}} \\
    a_{1} & b_{2} & a_{2} & \ddots  & {0} \\
    {0} & a_{2} & b_{3} & \ddots  & {0} \\
    \vdots & \ddots & \ddots & \ddots & a_{n-1} \\
    {e^{-2\pi i\theta}} & {0} & \cdots & a_{n-1}  & b_{n} \\ 
\end{pmatrix}.\\   
\end{equation}

Let us denote the discrete Sch\"{o}dinger matrix accordingly, i.e. \textbf{S}$_n(b,B)$ denotes the matrix \textbf{J}$_n$ such that $b_1=b$, $b_n=B$ and $a_k = 1$ for each $k \in \{1,2,\cdots,n-1\}$. Similarly \textbf{S}$_n(\theta)$ denotes the matrix \textbf{J}$_n(\theta)$ such that $a_k = 1$ for each $k \in \{1,2,\cdots,n-1\}$. 
Let us denote the free discrete Sch\"{o}dinger matrix of size $n \times n$ by $\textbf{F}_n$:
\begin{equation*}
    \textbf{F}_n = \begin{pmatrix}
        0 & 1  & 0 & \cdots & 0\\
        1 & 0  & 1 & \ddots & \vdots\\
        0  & 1 & 0 & \ddots & 0\\
        \vdots & \ddots & \ddots & \ddots & 1\\
        0 & \cdots & 0 & 1 & 0\\
      \end{pmatrix},
\end{equation*}
 so \textbf{F}$_n(b,B)$ and \textbf{F}$_n(\theta)$ denote the free discrete Sch\"{o}dinger matrices with boundary conditions $b$ at $0$, $B$ at $n+1$ and Floquet boundary conditions for $\theta$, respectively.
 
Let us state some basic properties of the free discrete Sch\"{o}dinger matrix. If $\lambda_1, \lambda_2, ..., \lambda_n$ denote the eigenvalues of $\textbf{F}_n$, they have the following properties:
\begin{itemize}
    \item For all $k$, $\lambda_k \in [-2, 2]$.
    \item The free discrete Sch\"{o}dinger matrix \textbf{F}$_n$ has $n$ distinct eigenvalues, so we can reorder the eigenvalues such that $\lambda_1 < \lambda_2 < \cdots <
    \lambda_n$.
    \item Let \textbf{F}$_{n-1}$ be the $(n-1) \times (n-1)$ submatrix of \textbf{F}$_n$ obtained by removing the last row and the last column of \textbf{F}$_{n}$. If $\mu_1, \mu_2, \cdots, \mu_{n-1}$ denote the eigenvalues of \textbf{F}$_{n-1}$ taken in increasing order, then we have the interlacing property of eigenvalues, i.e.
    \begin{equation*}
        \lambda_1 < \mu_1 < \lambda_2 < \mu_2 < ... < \lambda_{n-1} < \mu_{n-1} < \lambda_n
    \end{equation*}
\end{itemize}

The second and third properties are valid for any Jacobi matrix \textbf{J}$_{n}$. These basic properties can be found in \cite{TES}, which provides an extensive study of Jacobi operators.

The following results show smoothness of simple eigenvalues and corresponding eigenvectors of a smooth matrix-valued function. We will use them in Section 4 in order to prove the mixed inverse spectral problem mentioned in Introduction.

\begin{theorem}\emph{(}\cite{LAX}, Theorem 9.7\emph{)}\label{smth1}
Let $A(t)$ be a differentiable square matrix-valued function of the real variable $t$. Suppose that $A(0)$ has an eigenvalue $a_0$ of multiplicity one, in the sense that $a_0$ is a simple root of the characteristic polynomial of $A(0)$. Then for $t$ small enough, $A(t)$ has an eigenvalue $a(t)$ that depends differentiably on t, and which equals $a_o$ at zero, that is, $a(0) = a_0$.
\end{theorem}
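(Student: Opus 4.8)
The plan is to reduce the statement to the implicit function theorem applied to the characteristic polynomial. Writing $A(t)=(a_{ij}(t))$, I would set
\[
p(t,\lambda):=\det(\lambda I-A(t)).
\]
Since the determinant is a polynomial in the matrix entries and each $a_{ij}(t)$ is differentiable, $p(t,\lambda)$ is a monic polynomial of degree $n$ in $\lambda$ whose coefficients are differentiable functions of $t$. In particular $p$ is continuous in $(t,\lambda)$, polynomial (hence $C^\infty$) in $\lambda$, and differentiable in $t$, and the eigenvalues of $A(t)$ are exactly the roots of $p(t,\cdot)$.

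First I would record the two facts at the base point $t=0$. Because $a_0$ is an eigenvalue of $A(0)$ we have $p(0,a_0)=0$, and because it is a \emph{simple} root of the characteristic polynomial we have $\partial_\lambda p(0,a_0)\neq 0$. Thus $a_0$ is a nondegenerate zero of $p(0,\cdot)$, and by continuity of the roots of a monic polynomial in its coefficients there is, for $t$ small, a unique root $a(t)$ of $p(t,\cdot)$ in a small disc around $a_0$; this $a(t)$ is continuous, satisfies $a(0)=a_0$, and is an eigenvalue of $A(t)$. The remaining task is to upgrade continuity to differentiability, which in clean form is the implicit function theorem: from $p(0,a_0)=0$ and $\partial_\lambda p(0,a_0)\neq 0$ one obtains a differentiable branch $a(t)$ of $p(t,a(t))=0$ with
\[
a'(0)=-\frac{\partial_t p(0,a_0)}{\partial_\lambda p(0,a_0)}.
\]

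The main obstacle is a regularity mismatch: the hypothesis gives only differentiability (not $C^1$) of $A$, hence of the coefficients of $p$, whereas the textbook implicit function theorem assumes continuous differentiability. I would therefore avoid invoking the IFT as a black box and instead give a direct difference-quotient argument. Writing $p(t,\lambda)=\sum_{j=0}^n c_j(t)\lambda^j$ with each $c_j$ differentiable at $0$, one has the expansion $p(t,\lambda)=p(0,\lambda)+t\,\dot q(\lambda)+o(t)$ \emph{uniformly} for $\lambda$ in a bounded neighborhood of $a_0$, where $\dot q(\lambda):=\sum_{j}c_j'(0)\lambda^j$; and since $p(0,\cdot)$ is a polynomial, $p(0,a(t))-p(0,a_0)=\partial_\lambda p(0,a_0)\,(a(t)-a_0)+O\big((a(t)-a_0)^2\big)$. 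Subtracting these from $0=p(t,a(t))$ and using $a(t)\to a_0$ together with $\partial_\lambda p(0,a_0)\neq 0$ (so that the quadratic term is $o(a(t)-a_0)$), the two expansions combine to give
\[
\frac{a(t)-a_0}{t}\longrightarrow -\frac{\dot q(a_0)}{\partial_\lambda p(0,a_0)}\qquad (t\to 0),
\]
which is precisely differentiability of $a$ at $0$. Running the same argument at any nearby base point $t_0$, where $a(t_0)$ is still a simple root after shrinking the neighborhood, yields differentiability on a whole interval around $0$.

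An alternative I would keep in reserve, attractive because it also produces the spectral projection, is the Riesz-projection approach: choose a circle $\Gamma$ enclosing $a_0$ and no other eigenvalue of $A(0)$, set $P(t)=\tfrac{1}{2\pi i}\oint_\Gamma (zI-A(t))^{-1}\,dz$, and observe that for $t$ small $\Gamma$ still separates $a(t)$ from the rest of the spectrum, so $P(t)$ is a rank-one projection. The resolvent $(zI-A(t))^{-1}$ is differentiable in $t$ uniformly for $z\in\Gamma$ (matrix inversion is smooth and $\Gamma$ is compact), so $P(t)$ is differentiable, and then $a(t)=\operatorname{tr}\!\big(A(t)P(t)\big)$ is a product of differentiable matrix-valued functions, hence differentiable, with $a(0)=a_0$. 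This route replaces the polynomial-root bookkeeping by the single elementary estimate that the resolvent is differentiable in $t$ on the compact contour $\Gamma$, and the same uniformity remark resolves the differentiate-under-the-integral step.
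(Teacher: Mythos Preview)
The paper does not supply its own proof of this statement: Theorem~\ref{smth1} is quoted from Lax's textbook (\cite{LAX}, Theorem~9.7) and is used as a black box in Section~4, so there is no in-paper argument to compare against.

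Your proposal is nonetheless sound. The reduction to the characteristic polynomial $p(t,\lambda)=\det(\lambda I-A(t))$ and the observation that simplicity of $a_0$ gives $\partial_\lambda p(0,a_0)\neq 0$ is exactly the standard route, and you are right to flag the regularity mismatch between mere differentiability and the $C^1$ hypothesis of the textbook implicit function theorem. Your direct difference-quotient computation handles this correctly: the key point is that the $o(t)$ term in $p(t,\lambda)=p(0,\lambda)+t\,\dot q(\lambda)+o(t)$ is uniform for $\lambda$ in a compact neighborhood of $a_0$ because only finitely many differentiable coefficients $c_j(t)$ are involved, and the Taylor expansion of $p(0,\cdot)$ in $\lambda$ is exact since it is a polynomial. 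The bootstrap to a full interval is also fine, since simplicity of $a(t_0)$ persists for $t_0$ near $0$ by continuity of roots. The Riesz-projection alternative is equally valid and is in fact closer in spirit to how such results are often proved in the self-adjoint setting relevant to the paper.
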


\begin{theorem}\emph{(}\cite{LAX}, Theorem 9.8\emph{)}\label{smth2}
Let $A(t)$ be a differentiable matrix-valued function of $t$, $a(t)$ an eigenvalue of $A(t)$ of multiplicity one. Then we can choose an eigenvector $X(t)$ of $A(t)$ pertaining to the eigenvalue $a(t)$ to depend differentiably on $t$.
\end{theorem}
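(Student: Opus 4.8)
The plan is to produce the eigenvector explicitly from the classical adjoint (adjugate) of the singular matrix $M(t) := A(t) - a(t)I$, exploiting the algebraic identity $M(t)\,\operatorname{adj}(M(t)) = \det(M(t))\,I$. Since $a(t)$ is an eigenvalue of $A(t)$ for each small $t$ — this is exactly the differentiable branch furnished by Theorem \ref{smth1} — we have $\det(M(t)) = 0$, whence $M(t)\,\operatorname{adj}(M(t)) = 0$. In words, every column of $\operatorname{adj}(M(t))$ lies in $\ker M(t)$, i.e. is either zero or an eigenvector of $A(t)$ for the eigenvalue $a(t)$. This reduces the task to selecting a column that is nonzero and verifying its differentiability.

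First I would establish differentiability of the candidate. Each entry of $\operatorname{adj}(M(t))$ is, up to sign, an $(n-1)\times(n-1)$ minor of $M(t)$, hence a polynomial in the entries of $M(t)$. The entries of $M(t) = A(t) - a(t) I$ are differentiable functions of $t$, because $A(t)$ is differentiable by hypothesis and $a(t)$ is differentiable by Theorem \ref{smth1}. A polynomial in differentiable functions is differentiable, so $t \mapsto \operatorname{adj}(M(t))$ is a differentiable matrix-valued function of $t$.

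Next I would arrange that the chosen column does not vanish. Because $a_0 = a(0)$ is a simple root of the characteristic polynomial of $A(0)$, the eigenspace is one-dimensional and $M(0)$ has rank $n-1$; equivalently some $(n-1)\times(n-1)$ minor of $M(0)$ is nonzero, so $\operatorname{adj}(M(0)) \neq 0$. Fix an index $j_0$ for which the $j_0$-th column of $\operatorname{adj}(M(0))$ is nonzero and set $X(t) := \operatorname{adj}(M(t))\,e_{j_0}$. By continuity of the entries, $X(t) \neq 0$ for all $t$ in a neighborhood of $0$, and by the identity above $A(t) X(t) = a(t) X(t)$ there. Thus $X(t)$ is a differentiable, nonvanishing eigenvector on this neighborhood, as claimed.

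The step requiring the most care is the persistence of nonvanishing, i.e. that the selected column stays an honest eigenvector rather than collapsing to zero; here it suffices to note that simplicity of the eigenvalue is an open condition (the derivative of the characteristic polynomial at $a(t)$ is nonzero at $t=0$, hence nearby by continuity), so $M(t)$ keeps rank $n-1$ and $\operatorname{adj}(M(t))$ stays nonzero for small $t$. An alternative route, avoiding the adjugate, is to use the Riesz spectral projection $P(t) = \frac{1}{2\pi i}\oint_\Gamma (zI - A(t))^{-1}\,dz$, where $\Gamma$ is a small circle enclosing $a_0$ but no other eigenvalue of $A(0)$; differentiability of the resolvent on $\Gamma$ makes $P(t)$ differentiable, and $X(t) = P(t) v$ for a fixed $v$ with $P(0) v \neq 0$ yields a differentiable eigenvector by the same continuity argument. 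I would present the adjugate construction as the main proof, since it is entirely elementary and self-contained.
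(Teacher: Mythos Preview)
Your argument is correct. The adjugate construction is sound: the identity $M(t)\operatorname{adj}(M(t))=\det(M(t))I=0$ forces every column of $\operatorname{adj}(M(t))$ into $\ker M(t)$; the entries of $\operatorname{adj}(M(t))$ are polynomials in the differentiable entries of $M(t)=A(t)-a(t)I$ (differentiability of $a(t)$ coming from Theorem~\ref{smth1}); and simplicity at $t=0$ makes $\operatorname{rank}M(0)=n-1$, so some column of $\operatorname{adj}(M(0))$ is nonzero and, by continuity, stays nonzero on a neighborhood. One small remark: since the statement already hypothesizes that $a(t)$ has multiplicity one for the relevant $t$, you do not actually need the separate ``persistence of simplicity'' argument---continuity of the chosen column alone already keeps it nonvanishing near $0$.

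As for comparison with the paper: the paper does \emph{not} supply its own proof of this theorem. It is quoted verbatim as Theorem~9.8 of \cite{LAX} and used as a black box in Section~4. So there is no in-paper argument to compare against. For what it is worth, the adjugate construction you give is essentially the proof Lax presents in the cited reference, so your write-up is well aligned with the intended source; the Riesz-projection alternative you sketch is also standard and equally valid, trading elementary polynomial algebra for a contour-integral formula.
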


Once we obtain smoothness of an eigenvalue and the corresponding eigenvector of a smooth self-adjoint matrix, the Hellmann-Feynman theorem relates the derivatives of the eigenvalue and the matrix with the corresponding eigenvector. 

\begin{theorem}[Hellmann-Feynman]\emph{(}\cite{SIM2}, Theorem 1.4.7\emph{)}\label{hf}
Let $A(t)$ be self-adjoint matrix-valued, $X(t)$ be vector-valued and $\lambda(t)$ be real-valued functions. If $A(t)X(t) = \lambda(t)X(t)$ and $||X(t)|| = 1$, then 
$$
\lambda'(t) = \langle X(t) , A'(t)X(t) \rangle.
$$

\end{theorem}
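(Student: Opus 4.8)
The plan is to prove the identity by differentiating the eigenvalue equation $A(t)X(t)=\lambda(t)X(t)$ directly in $t$ and then pairing the result against the eigenvector $X(t)$, using the self-adjointness of $A(t)$ together with the reality of $\lambda(t)$ to eliminate every term containing the unknown derivative $X'(t)$. I adopt the convention that $\langle\cdot,\cdot\rangle$ is conjugate-linear in its first argument, so that self-adjointness of $A(t)$ reads $\langle u,A(t)v\rangle=\langle A(t)u,v\rangle$; all of $A(t)$, $X(t)$, $\lambda(t)$ are assumed differentiable (this is precisely the differentiability furnished by Theorems \ref{smth1} and \ref{smth2} in the setting where this result is applied).

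First I would differentiate $A(t)X(t)=\lambda(t)X(t)$ by the product rule to obtain
\[
A'(t)X(t)+A(t)X'(t)=\lambda'(t)X(t)+\lambda(t)X'(t).
\]
Next I would take the inner product of both sides with $X(t)$ placed in the first slot. Since $\langle X(t),X(t)\rangle=\|X(t)\|^2=1$, the right-hand side collapses to $\lambda'(t)+\lambda(t)\langle X(t),X'(t)\rangle$, while the left-hand side becomes $\langle X(t),A'(t)X(t)\rangle+\langle X(t),A(t)X'(t)\rangle$.

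The decisive step is to rewrite the cross term $\langle X(t),A(t)X'(t)\rangle$. Transferring $A(t)$ to the other slot by self-adjointness and then invoking the eigenvalue equation gives
\[
\langle X(t),A(t)X'(t)\rangle=\langle A(t)X(t),X'(t)\rangle=\langle \lambda(t)X(t),X'(t)\rangle=\overline{\lambda(t)}\,\langle X(t),X'(t)\rangle=\lambda(t)\langle X(t),X'(t)\rangle,
\]
where the final equality uses that $\lambda(t)$ is real. Substituting this back, the terms $\lambda(t)\langle X(t),X'(t)\rangle$ appearing on the two sides cancel, leaving exactly $\langle X(t),A'(t)X(t)\rangle=\lambda'(t)$, which is the asserted formula.

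I do not expect a genuine obstacle: once the eigenvalue equation is differentiated, the argument is a short bookkeeping computation. The only point demanding care is the cancellation of the $X'(t)$-terms, which relies simultaneously on the self-adjointness of $A(t)$ (to move $A(t)$ off of $X'(t)$) and on $\lambda(t)$ being real (so that $\overline{\lambda(t)}=\lambda(t)$). If either hypothesis were dropped, a term proportional to $\langle X(t),X'(t)\rangle$ would survive and the clean identity would fail, so I would invoke both explicitly; consistency is further confirmed by noting that $A'(t)$ is again self-adjoint, so $\langle X(t),A'(t)X(t)\rangle$ is automatically real, matching the reality of $\lambda'(t)$.
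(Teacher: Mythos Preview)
Your argument is correct and is the standard proof of the Hellmann--Feynman identity. Note, however, that the paper does not supply its own proof of this statement: Theorem~\ref{hf} is merely quoted from \cite{SIM2} and used as a black box in the proof of Theorem~\ref{Amb3}, so there is nothing in the paper to compare your approach against.
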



 \section{\bf {Ambarzumian problem with various boundary conditions}}
 
 In addition to the notations we introduced in the previous section, let $p_n(x)$ be the characteristic polynomial of $\textbf{F}_n$ with zeroes $\lambda_1,\dots,\lambda_n$ and let $q_n(x)$ be the characteristic polynomial of $\textbf{S}_n$ with zeroes $\mu_1,\dots,\mu_n$.

Let us start by obtaining the first three leading coefficients of $q_n(x)$. This is a well-known result, but we give a proof in order make this section self-contained.
\begin{lemma}\label{charpol}
The characteristic polynomial $q_n(x)$ of the discrete Schr\"{o}dinger matrix \textbf{\emph{S}}$_n$ has the form
$$q_n(x)=x^n-\left(\sum_{i=1}^n b_i\right)x^{n-1}+\left(\sum_{\substack{1\le i<j\le n}}b_ib_j-(n-1)\right)x^{n-2} + Q_{n-2}(x),$$
where $Q_{n-2}(x)$ is a polynomial of degree at most $n-2$.
\end{lemma}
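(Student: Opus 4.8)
The plan is to reduce everything to the standard three-term recurrence for characteristic polynomials of tridiagonal matrices. For $0 \le k \le n$ let $q_k(x)$ denote the characteristic polynomial of the top-left $k \times k$ submatrix of $\mathbf{S}_n$, with the conventions $q_0(x) \equiv 1$ and $q_{-1}(x) \equiv 0$, so that $q_n$ is the polynomial in the statement. Cofactor expansion along the last row and column (see \cite{TES}) together with $a_k = 1$ gives
\begin{equation*}
q_k(x) = (x - b_k)\,q_{k-1}(x) - q_{k-2}(x), \qquad 1 \le k \le n.
\end{equation*}
Writing $q_k(x) = x^k + c_{k,1}x^{k-1} + c_{k,2}x^{k-2} + (\text{terms of degree} \le k-3)$ and matching coefficients in this recurrence yields the scalar recursions $c_{k,1} = c_{k-1,1} - b_k$ and $c_{k,2} = c_{k-1,2} - b_k c_{k-1,1} - 1$, where the trailing $-1$ comes precisely from the $-q_{k-2}$ term.

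Next I would solve these recursions by induction on $k$. The first, with base case $c_{1,1} = -b_1$, gives $c_{k,1} = -\sum_{i=1}^{k} b_i$ at once. Substituting this into the second recursion turns it into $c_{k,2} = c_{k-1,2} + b_k\sum_{i=1}^{k-1} b_i - 1$; telescoping from the base value $c_{1,2} = 0$ (equivalently, checking $q_2(x) = x^2 - (b_1+b_2)x + (b_1 b_2 - 1)$ directly) produces $c_{k,2} = \sum_{1 \le i < j \le k} b_i b_j - (k-1)$, since each step contributes the new cross terms $b_k(b_1 + \dots + b_{k-1})$ and one additional $-1$. Taking $k = n$ gives the claimed formula.

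A coefficient-free alternative is to invoke the identity that the coefficient of $x^{n-k}$ in $\det(xI - A)$ equals $(-1)^k$ times the sum of the $k\times k$ principal minors of $A$: then the $x^{n-1}$ coefficient is $-\operatorname{tr}(\mathbf{S}_n) = -\sum_i b_i$, and the $x^{n-2}$ coefficient is $\sum_{1\le i<j\le n}\det\!\begin{pmatrix} b_i & (\mathbf{S}_n)_{ij} \\ (\mathbf{S}_n)_{ij} & b_j \end{pmatrix}$, which equals $b_i b_j$ unless $j = i+1$, in which case the off-diagonal entries equal $1$ and the minor is $b_i b_{i+1} - 1$; since there are exactly $n-1$ nearest-neighbor index pairs, this accounts for the $-(n-1)$.

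There is no genuine obstacle here: the lemma is a routine computation, included only for self-containedness. The only points requiring care are the sign bookkeeping in the recurrence (the $-1$ originating from $a_{k-1}^2 = 1$) and the base cases, so that the empty sums at $k=1$ and the count $n-1$ of consecutive index pairs come out correctly.
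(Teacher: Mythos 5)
Your proof is correct, but it follows a different route from the paper. The paper works directly from the Leibniz expansion of $\det(x\textbf{I}_n-\textbf{S}_n)$: the identity permutation produces $\prod_i(x-b_i)$, which accounts for monicity, the trace term, and the $\sum_{i<j}b_ib_j$ contribution, while the only other permutations reaching degree $n-2$ are the $n-1$ adjacent transpositions $(i,i+1)$, each contributing $-x^{n-2}$; summing gives the $-(n-1)$. Your main argument instead uses the three-term recurrence $q_k(x)=(x-b_k)q_{k-1}(x)-q_{k-2}(x)$ for the leading principal characteristic polynomials and solves the resulting scalar recursions $c_{k,1}=c_{k-1,1}-b_k$ and $c_{k,2}=c_{k-1,2}-b_kc_{k-1,1}-1$ by telescoping; the bookkeeping (including the base case handled via $q_2(x)=x^2-(b_1+b_2)x+b_1b_2-1$) is accurate, and the $-1$ per step correctly yields $-(k-1)$. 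This inductive route has the advantage of generalizing immediately to arbitrary Jacobi matrices (the $-1$ becomes $-a_{k-1}^2$, giving $-\sum a_i^2$ in place of $-(n-1)$), at the cost of introducing the auxiliary family $q_k$; the paper's permutation count is a single global computation. Your second, coefficient-free alternative via sums of $2\times 2$ principal minors is closer in spirit to the paper's argument (it packages the same combinatorics into a standard identity) and is also correct.
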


\begin{proof}
The characteristic polynomial $q_n(x)$ is given by $\det(x\textbf{I}_n - \textbf{S}_n)$. Let us consider Liebniz' formula for the determinants
\begin{equation}\label{Lebfor}
    \det(\textbf{A})=\sum_{\sigma\in \mathbb{S}_n}\textrm{sgn}(\sigma)\prod_{i=1}^n\alpha_{i,\sigma(i)},
\end{equation}
where $\textbf{A} = [\alpha_{i,j}]$ is an $n \times n$ matrix and \textrm{sgn} is the sign function of permutations in the permutation group $\mathbb{S}_n$, which returns $+1$ and $-1$ for even and odd permutations, respectively. If we use (\ref{Lebfor}) with the identity permutation, then $\det(x\textbf{I}_n - \textbf{S}_n)$ becomes $\prod_{i=1}^n (x-b_i)$, so we can get that $q_n(x)$ is a monic polynomial and the coefficient of $x^{n-1}$ is $-\operatorname{tr}(\textbf{S}_n)$. 

The coefficient of the $x^{n-2}$ term of $q_n(x)$ is formed from the sum of all disjoint pairs of $a_i$. Hence we obtain $\sum_{1 \leq i < j\leq n}b_ib_j$.

The only other permutation that will yield an $x^{n-2}$ term is a transposition. However, if $|i-j|\geq 1$, then the product will be zero. Thus we are looking for transpositions where $|i-j|=1$. There are $n-1$ of these, namely $(1,2),(2,3),\dots,(n-2,n-1),(n-1,n)$. The product is of the form
\begin{equation*}
        \prod_{i=1}^n \alpha_{i, \sigma(i)}  = \frac{(-1)(-1)(x-b_1)(x-b_2)\cdots(x-b_n)}{(x-b_i)(x-b_j)} 
         = x^{n-2} + r_{n-3}(x),
\end{equation*}
where $r_{n-3}(x)$ is a polynomial of degree at most $n-3$. Since the signature of a transposition is negative, we derive $-x^{n-2}$ for each product. Summing over all $n-1$ permutations and adding to $\sum_{1\leq i < j\leq n}b_ib_j$ yields our desired result.
\end{proof}

\begin{corollary}\label{cor1}
The characteristic polynomial $p_n(x)$ of the free discrete Schr\"{o}dinger matrix \textbf{\emph{F}}$_n$ has the form
$$p_n(x)=x^n-(n-1)x^{n-2} + P_{n-2}(x),$$
where $P_{n-2}(x)$ is a polynomial of degree at most $n-2$.
\end{corollary}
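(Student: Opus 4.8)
The plan is to read this off directly from Lemma \ref{charpol}. The free discrete Schr\"odinger matrix $\textbf{F}_n$ is exactly the discrete Schr\"odinger matrix $\textbf{S}_n$ in the special case $b_1 = b_2 = \cdots = b_n = 0$, so its characteristic polynomial $p_n(x)$ is obtained from the formula for $q_n(x)$ by setting every $b_i$ equal to zero. Under this substitution the coefficient $-\sum_{i=1}^n b_i$ of $x^{n-1}$ becomes $0$, the coefficient $\sum_{1\le i<j\le n} b_i b_j - (n-1)$ of $x^{n-2}$ becomes $-(n-1)$, and the remaining lower-order contribution $Q_{n-2}(x)$ — a polynomial of degree at most $n-2$ — is renamed $P_{n-2}(x)$. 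This yields $p_n(x) = x^n - (n-1)x^{n-2} + P_{n-2}(x)$ at once.

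If one preferred an independent derivation, I would instead expand $\det(x\textbf{I}_n - \textbf{F}_n)$ along the last row to obtain the three-term recurrence $p_n(x) = x\,p_{n-1}(x) - p_{n-2}(x)$ with $p_0(x) = 1$ and $p_1(x) = x$ (so that the $p_n$ are rescaled Chebyshev polynomials of the second kind), and then prove the claimed shape by induction on $n$: writing $p_{n-1}(x) = x^{n-1} - (n-2)x^{n-3} + \cdots$ and $p_{n-2}(x) = x^{n-2} - \cdots$, the recurrence gives leading terms $x^n - (n-2)x^{n-2} - x^{n-2} + \cdots = x^n - (n-1)x^{n-2} + \cdots$, which closes the induction. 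Either route is routine; the first is a one-line specialization and is the one I would actually write.

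There is essentially no obstacle here: the only thing to verify is the (immediate) identification of $\textbf{F}_n$ with the $b_i \equiv 0$ case of $\textbf{S}_n$, after which Lemma \ref{charpol} does all the work.
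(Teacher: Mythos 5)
Your proposal is correct and matches the paper's proof exactly: the paper also obtains Corollary \ref{cor1} by setting $b_i=0$ for all $i$ in Lemma \ref{charpol}. The alternative recurrence argument you sketch is fine but unnecessary; the one-line specialization is the intended route.
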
 
\begin{proof}
Simply set $b_i = 0$ for each $i \in \{1,\cdots,n\}$ and apply Lemma \ref{charpol}.
\end{proof}

Let us start by giving a proof of Ambarzumian problem with Dirichlet-Dirichlet boundary conditions, i.e. for the matrix \textbf{F}$_n(0,0)$ = \textbf{F}$_n$ in our notation.

\begin{theorem}\label{Amb1}
 Suppose \textbf{\emph{S}}$_n$ shares all of its eigenvalues with \textbf{\emph{F}}$_n$. Then \textbf{\emph{S}}$_n=~$\textbf{\emph{F}}$_n$.
 \end{theorem}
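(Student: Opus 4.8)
The plan is to exploit the first few coefficients of the characteristic polynomials, which are completely determined by the eigenvalues, together with a positivity/convexity argument. Since \textbf{S}$_n$ and \textbf{F}$_n$ share all $n$ eigenvalues, their characteristic polynomials coincide: $q_n(x) = p_n(x)$. Comparing the coefficient of $x^{n-1}$ via Lemma \ref{charpol} and Corollary \ref{cor1} gives $\sum_{i=1}^n b_i = 0$. Comparing the coefficient of $x^{n-2}$ gives $\sum_{1\le i<j\le n} b_i b_j - (n-1) = -(n-1)$, hence $\sum_{1\le i<j\le n} b_i b_j = 0$.

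The key step is then the elementary identity
\begin{equation*}
\sum_{i=1}^n b_i^2 = \left(\sum_{i=1}^n b_i\right)^2 - 2\sum_{1\le i<j\le n} b_i b_j.
\end{equation*}
Both terms on the right vanish by the previous paragraph, so $\sum_{i=1}^n b_i^2 = 0$. Since each $b_i$ is real, this forces $b_i = 0$ for every $i \in \{1,\dots,n\}$. Together with the standing assumption $a_k = 1$ for all $k$ (which is part of the definition of \textbf{S}$_n$), this yields \textbf{S}$_n = $ \textbf{F}$_n$, as desired.

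In short, the proof is just: match the top three coefficients of the characteristic polynomials, read off that both the first and second elementary symmetric functions of the $b_i$ vanish, and conclude from $\sum b_i^2 = (\sum b_i)^2 - 2\sum_{i<j} b_i b_j = 0$ that all diagonal entries are zero. There is no real obstacle here; the only thing to be careful about is that one genuinely needs \emph{two} coefficients (the $x^{n-1}$ and $x^{n-2}$ coefficients) — a single spectral constraint such as the trace alone would not suffice, which is exactly the phenomenon illustrated later by Example \ref{exmp} for nonzero boundary conditions. I would also remark that the same computation shows the hypothesis can be weakened: it is enough that \textbf{S}$_n$ and \textbf{F}$_n$ have the same trace and the same sum of squares of eigenvalues (equivalently $\operatorname{tr}(\textbf{S}_n) = \operatorname{tr}(\textbf{F}_n)$ and $\operatorname{tr}(\textbf{S}_n^2) = \operatorname{tr}(\textbf{F}_n^2)$), since $\operatorname{tr}(\textbf{S}_n^2) = \sum_i b_i^2 + 2\sum_{k=1}^{n-1} a_k^2$.
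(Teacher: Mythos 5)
Your proof is correct and follows exactly the paper's own argument: compare the $x^{n-1}$ and $x^{n-2}$ coefficients via Lemma \ref{charpol} and Corollary \ref{cor1} to get $\sum_i b_i = 0$ and $\sum_{i<j} b_i b_j = 0$, then conclude $\sum_i b_i^2 = 0$, hence all $b_i = 0$. The closing remarks about traces are a fine (and accurate) aside but add nothing beyond the paper's reasoning.
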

\begin{proof}
In order for the two matrices to have all the same eigenvalues, they must have equal characteristic polynomials. Comparing the results from Lemma \ref{charpol} to Corollary \ref{cor1}, we must have 
\begin{equation}
    \sum_{i=1}^{n}b_i=0\qquad \text{and} \qquad
    \sum_{1\leq i < j\leq n}b_ib_j=0
\end{equation}
This leads us to conclude that 
\begin{equation*}
     \sum_{i=1}^{n} b_i^2 = \left(\sum_{i=1}^{n} b_i\right)^2-2\left(\sum_{1\leq i < j\leq n}b_ib_j\right)=0
\end{equation*}
which only occurs when all the $b_i$ are zero, i.e. $\textbf{S}_n=\textbf{F}_n$, since $b_i \in \mathbb{R}$ for each $i \in \{1,\cdots,n\}$.
\end{proof}

A natural question to ask is whether or not we get the uniqueness of the free operator with non-zero boundary conditions. At this point let us recall Borg and Levinson's famous two-spectra theorem \cite{BOR,LEVI}, which says that two spectra for different boundary conditions of a regular Schr\"{o}dinger operator on a finite interval uniquely determines the operator. Finite Jacobi analogs of two-spectra theorem were proved by Gesztesy and Simon.

Given a Jacobi matrix \textbf{J}$_n$, define \textbf{J}$_n^{(b)}$ as the Jacobi matrix where all $a_k$'s and $b_k$'s are the same as \textbf{J}$_n$ except $b_1$ is replaced by $b_1+b$ for $b \in \mathbb{R}$, i.e.
\begin{equation}
\textbf{J}_n^{(b)} := \textbf{J}_n + b(\delta_1,\cdot)\delta_1.
\end{equation}

Let us denote the eigenvalues of \textbf{J}$_n^{(b)}$ by $\lambda^{(b)}_k$ for $k \in \{1,2,\cdots,n\}$. Note that \textbf{J}$_n$ and \textbf{J}$_n^{(b)}$ represent the same Jacobi difference equation with different boundary conditions, namely \textbf{J}$_n$ with boundary conditions $b_1$ at $0$, $b_n$ at $n+1$ and \textbf{J}$_n^{(b)}$ with boundary conditions $b_1+b$ at $0$, $b_n$ at $n+1$. The Jacobi matrix \textbf{J}$_n^{(b)}$ can also be seen as a rank-one perturbation of the Jacobi matrix \textbf{J}$_n$.

Gesztesy and Simon \cite{GS} proved that if $b$ is known, then the spectrum of \textbf{J}$_n$ and the spectrum of \textbf{J}$_n^{(b)}$ except one eigenvalue uniquely determine the Jacobi matrix. 

\begin{theorem}\emph{(}\cite{GS}, Theorem 5.1\emph{)}\label{GesSim1}
The eigenvalues $\lambda_1,\cdots,\lambda_n$ of \textbf{\emph{J}}$_n$, together with $b$ and $n-1$ eigenvalues $\lambda^{(b)}_1,\cdots,\lambda^{(b)}_{n-1}$ of \textbf{\emph{J}}$_n^{(b)}$, determine \textbf{\emph{J}}$_n$ uniquely.
\end{theorem}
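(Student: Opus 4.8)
The plan is to leave the matrices behind and work with the Weyl $m$-function, exploiting that $\mathbf{J}_n^{(b)}$ is a rank-one perturbation of $\mathbf{J}_n$. For a Jacobi matrix with cyclic vector $\delta_1$, set
\[
m(z) = \langle \delta_1, (\mathbf{J}_n - z)^{-1}\delta_1\rangle = -\frac{P_{n-1}^{(1)}(z)}{P_n(z)},
\]
where $P_n(z) = \det(z - \mathbf{J}_n) = \prod_{k=1}^n(z-\lambda_k)$ is the characteristic polynomial and $P_{n-1}^{(1)}$ is the characteristic polynomial of the Jacobi matrix obtained by deleting the first row and column (this identity is Cramer's rule). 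The poles of $m$ are the $\lambda_k$, and its residues are the weights $\alpha_k = |\langle\delta_1,\psi_k\rangle|^2 > 0$ of the spectral measure $d\rho = \sum_k \alpha_k\,\delta_{\lambda_k}$ associated with $\delta_1$. The standard reconstruction theorem for Jacobi matrices (see \cite{TES}) states that $d\rho$, equivalently $m$, determines $\mathbf{J}_n$ uniquely via the three-term recurrence for the orthonormal polynomials of $\rho$. Hence it suffices to prove that the given data pin down $m$.

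Next I would record the rank-one structure. Since $\mathbf{J}_n^{(b)} = \mathbf{J}_n + b(\delta_1,\cdot)\delta_1$, the Sherman--Morrison (Aronszajn--Krein) identity gives $m^{(b)}(z) = m(z)/(1 + b\,m(z))$, which on the level of characteristic polynomials reads
\[
R_n(z) := \det(z - \mathbf{J}_n^{(b)}) = P_n(z) - b\,P_{n-1}^{(1)}(z) = \prod_{k=1}^n (z-\lambda_k^{(b)}).
\]
Comparing the coefficients of $z^{n-1}$ on the two ends is exactly the trace identity $\sum_{k=1}^n \lambda_k^{(b)} = \sum_{k=1}^n \lambda_k + b$. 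This is the decisive point: the one missing eigenvalue of $\mathbf{J}_n^{(b)}$ is forced by the data, namely
\[
\lambda_n^{(b)} = b + \sum_{k=1}^n \lambda_k - \sum_{k=1}^{n-1}\lambda_k^{(b)}.
\]

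With the full spectrum $\{\lambda_k^{(b)}\}_{k=1}^n$ now recovered, the polynomial $R_n$ is known, and since $b\neq 0$ the relation above inverts to $P_{n-1}^{(1)}(z) = b^{-1}(P_n(z) - R_n(z))$. Therefore
\[
m(z) = \frac{1}{b}\Big(\frac{R_n(z)}{P_n(z)} - 1\Big)
\]
is completely determined, and in particular the spectral weights come out as $\alpha_k = -R_n(\lambda_k)/(b\,P_n'(\lambda_k))$. By the reconstruction theorem quoted above, $\mathbf{J}_n$ is determined uniquely, which is the assertion.

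The main obstacle is not the algebra, which collapses as soon as the trace identity supplies the missing eigenvalue, but the two structural facts underpinning it: first, the perturbation formula relating $m^{(b)}$ to $m$ (equivalently $R_n = P_n - b\,P_{n-1}^{(1)}$), which needs a careful cofactor/resolvent computation; and second, the nontrivial theorem that the spectral measure of $\delta_1$ determines a Jacobi matrix with positive off-diagonal entries uniquely. This is where cyclicity of $\delta_1$ and positivity of the weights $\alpha_k$ are essential, and it is also why one must assume $b\neq 0$: for $b=0$ the two matrices coincide, the second spectrum carries no new information, and a single spectrum does not determine a Jacobi matrix.
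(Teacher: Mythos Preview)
The paper does not supply its own proof of this statement; Theorem~\ref{GesSim1} is quoted verbatim from Gesztesy--Simon as background for the subsequent discussion and is not reproved here. There is therefore nothing in the paper to compare your argument against.

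That said, your argument is correct and is essentially the original Gesztesy--Simon proof. The cofactor identity $R_n = P_n - b\,P_{n-1}^{(1)}$ is immediate from expanding $\det(zI - \mathbf{J}_n^{(b)})$ along the first row, since only the $(1,1)$ entry differs from $\mathbf{J}_n$; the trace identity then recovers the missing eigenvalue, and with both characteristic polynomials in hand the $m$-function is determined, after which the classical reconstruction of a Jacobi matrix from its spectral measure (cyclicity of $\delta_1$, positivity of the $a_k$) finishes the job. Your closing remark that $b\neq 0$ is an implicit hypothesis is also well taken: for $b=0$ the two matrices coincide and a single spectrum does not determine a Jacobi matrix, so the statement is vacuous in that degenerate case.
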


Note that it is irrelevant which $n-1$ eigenvalues from the spectrum of \textbf{J}$_n^{(b)}$ are known. Gesztesy and Simon also showed that if two spectra are known, the uniqueness result is obtained without knowing $b$.

\begin{theorem}\emph{(}\cite{GS}, Theorem 5.2\emph{)}\label{GesSim2}
The eigenvalues $\lambda_1,\cdots,\lambda_n$ of \textbf{\emph{J}}$_n$, together with the $n$ eigenvalues $\lambda^{(b)}_1,\cdots,\lambda^{(b)}_{n}$ of some \textbf{\emph{J}}$_n^{(b)}$ (with $b$ unknown), determine \textbf{\emph{J}}$_n$ and $b$ uniquely.
\end{theorem}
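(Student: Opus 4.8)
My plan is to reduce the statement to Theorem~\ref{GesSim1} by first extracting the value of $b$ from the two spectra. Since $\textbf{J}_n^{(b)}=\textbf{J}_n+b(\delta_1,\cdot)\delta_1$ differs from $\textbf{J}_n$ only in its $(1,1)$-entry, which becomes $b_1+b$, we have $\operatorname{tr}\textbf{J}_n^{(b)}=\operatorname{tr}\textbf{J}_n+b$; as the trace equals the sum of the eigenvalues,
\[
b=\sum_{k=1}^{n}\lambda^{(b)}_k-\sum_{k=1}^{n}\lambda_k.
\]
Thus $b$ is determined by the given data, and feeding this value, the eigenvalues $\lambda_1,\dots,\lambda_n$, and (say) $\lambda^{(b)}_1,\dots,\lambda^{(b)}_{n-1}$ into Theorem~\ref{GesSim1} shows that $\textbf{J}_n$ is uniquely determined; hence so is the pair $(\textbf{J}_n,b)$. (One tacitly assumes $b\neq0$, i.e.\ that the two spectra are genuinely distinct; the displayed formula then automatically yields a nonzero value.)

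To make the argument self-contained---and to see why Theorem~\ref{GesSim1} itself holds---I would work with the Weyl function $m(z)=\langle\delta_1,(\textbf{J}_n-z)^{-1}\delta_1\rangle$. By Cramer's rule $m(z)=-D_{n-1}(z)/D_n(z)$, where $D_n(z)=\det(z\textbf{I}_n-\textbf{J}_n)=\prod_{k=1}^{n}(z-\lambda_k)$ and $D_{n-1}(z)$ is the characteristic polynomial of the $(n-1)\times(n-1)$ matrix obtained by deleting the first row and column of $\textbf{J}_n$. The crucial observation is that this deletion also removes the entry carrying $b$, so the \emph{same} polynomial $D_{n-1}$ occurs for $\textbf{J}_n^{(b)}$, giving $m_b(z)=-D_{n-1}(z)/D_n^{(b)}(z)$ with $D_n^{(b)}(z)=\prod_{k=1}^{n}(z-\lambda^{(b)}_k)$. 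The rank-one resolvent (Aronszajn--Krein) identity $m_b=m/(1+bm)$, equivalently $m_b^{-1}-m^{-1}=b$, then rearranges to
\[
D_n(z)-D_n^{(b)}(z)=b\,D_{n-1}(z).
\]
Comparing the coefficients of $z^{n-1}$ recovers $b$ (consistently with the trace formula), after which $D_{n-1}=(D_n-D_n^{(b)})/b$ is known, and hence so is the rational function $m(z)$.

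It then remains to recover $\textbf{J}_n$ from $m$. Since $a_k>0$ for every $k$, the vector $\delta_1$ is cyclic for $\textbf{J}_n$, so the spectral measure $d\rho$ of the pair $(\textbf{J}_n,\delta_1)$ puts strictly positive weight at each $\lambda_k$; these weights, and hence $d\rho$, are read off from the residues of $m$ at its poles. Equivalently, the moments $\langle\delta_1,\textbf{J}_n^{k}\delta_1\rangle$ appear in the Laurent expansion of $m$ at infinity, and applying Gram--Schmidt to $1,z,z^2,\dots$ in $L^2(d\rho)$ produces the orthonormal polynomials whose three-term recurrence coefficients are precisely the $a_k$ and $b_k$, the sign in $a_k=+\sqrt{a_k^{2}}$ being pinned down by positivity. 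I expect this last step---the finite analog of Marchenko's theorem that the spectral measure determines the operator---to be the only substantive ingredient; everything else is elementary, and invoking Theorem~\ref{GesSim1} simply lets us cite this fact rather than reprove it.
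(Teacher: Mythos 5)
This statement is quoted from Gesztesy--Simon (\cite{GS}, Theorem 5.2), and the paper under review gives no proof of it, so there is nothing internal to compare against; judged on its own, your argument is correct and is essentially the original route: the trace identity $\operatorname{tr}\textbf{J}_n^{(b)}=\operatorname{tr}\textbf{J}_n+b$ recovers $b$ from the two eigenvalue sums, and then Theorem \ref{GesSim1} finishes the uniqueness of \textbf{J}$_n$. Your self-contained sketch is also sound and is in fact the mechanism behind Theorem \ref{GesSim1} in \cite{GS}: deleting the first row and column kills the perturbation, so $D_n(z)-D_n^{(b)}(z)=b\,D_{n-1}(z)$ follows from $m_b^{-1}-m^{-1}=b$, and once $m$ is known, the spectral measure of $(\textbf{J}_n,\delta_1)$ (cyclicity of $\delta_1$ because all $a_k>0$) determines the entries via the three-term recurrence of the orthonormal polynomials. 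The only caveat is the one you flag yourself: the statement needs $b\neq 0$ (as in \cite{GS}), since for $b=0$ the data degenerates to a single spectrum, which by Example \ref{exmp} does not determine the matrix; when $b\neq 0$ the strict interlacing of the two spectra guarantees your trace formula indeed returns the nonzero $b$.
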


After Theorems \ref{Amb1}, \ref{GesSim1} and \ref{GesSim2} one may expect to get the uniqueness of a free discrete Schr\"{o}dinger operator from a spectrum with non-zero boundary condition at $0$. However, this is not the case because of the following counterexample:
\begin{example}\label{exmp}
Let us define the discrete Schr\"{o}dinger matrices
\center
$A :=
\begin{pmatrix}
2 & 1 & 0\\
1 & 0 & 1\\
0 & 1 & 0
\end{pmatrix}
\text{ and } 
B :=
\begin{pmatrix}
-2/(1+\sqrt{5}) & 1 & 0\\
1 & 1 & 1\\
0 & 1 & (1+\sqrt{5})/2\\
\end{pmatrix}$.
\flushleft 

The matrices $A$ and $B$ have the same characteristic polynomial $x^3-2x^2-2x+2$, so they share the same spectrum.
\end{example}

This example shows that Theorem \ref{Amb1} was a special case, so in order to get uniqueness of a rank-one perturbation of the free operator, we also need to know the non-zero boundary condition along with the spectrum.

\begin{theorem}\label{nzbc}
 Suppose \textbf{\emph{S}}$_n(b,b_n)$ shares all of its eigenvalues with \textbf{\emph{F}}$_n(b,0)$. Then \textbf{\emph{S}}$_n(b,b_n)=~$\textbf{\emph{F}}$_n(b,0)$.
\end{theorem}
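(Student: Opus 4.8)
The plan is to compare characteristic polynomials just as in the proof of Theorem \ref{Amb1}, but now tracking the boundary parameter $b$ carefully. Write $\textbf{S}_n(b,b_n)$ as the Jacobi matrix with $a_k=1$ for all $k$, $b_1=b$, $b_n=b_n$, and $b_2=\dots=b_{n-1}=0$, and $\textbf{F}_n(b,0)$ as the one with $a_k=1$, $b_1=b$, and all other diagonal entries zero. By Lemma \ref{charpol} the characteristic polynomial of $\textbf{S}_n(b,b_n)$ has leading coefficients governed by $\sum_i b_i = b + b_n$ and $\sum_{i<j} b_i b_j = b\,b_n$, while for $\textbf{F}_n(b,0)$ the same sums are $b$ and $0$. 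Equality of spectra forces equality of these two symmetric functions.

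First I would set up the two equations
\begin{align*}
b + b_n &= b,\\
b\,b_n &= 0,
\end{align*}
coming from matching the $x^{n-1}$ and $x^{n-2}$ coefficients. The first equation immediately gives $b_n = 0$. (Alternatively, substitute into the identity $\sum_i b_i^2 = (\sum_i b_i)^2 - 2\sum_{i<j} b_i b_j$: here the left side is $b^2 + b_n^2$ and the right side is $b^2$, so $b_n^2 = 0$.) Either way, once $b_n=0$ we have shown that every diagonal entry of $\textbf{S}_n(b,b_n)$ agrees with the corresponding entry of $\textbf{F}_n(b,0)$, and since the off-diagonal entries are already equal to $1$ by hypothesis, the two matrices coincide: $\textbf{S}_n(b,b_n) = \textbf{F}_n(b,0)$.

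The only subtlety — and it is mild — is making sure the coefficient comparison in Lemma \ref{charpol} is genuinely applicable: Lemma \ref{charpol} is stated for $\textbf{S}_n$ with free first diagonal entries, but the boundary-condition interpretation means $\textbf{S}_n(b,b_n)$ is literally a matrix of the form covered by that lemma (with $b_1$ and $b_n$ allowed to be any reals), so the formula for $q_n(x)$ applies verbatim with this choice of $\{b_i\}$. Likewise the characteristic polynomial of $\textbf{F}_n(b,0)$ is obtained from Lemma \ref{charpol} by setting $b_1=b$ and all other $b_i=0$. So there is no real obstacle here; the proof is a direct two-line computation once the bookkeeping of which $b_i$ are zero is made explicit. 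The contrast with Example \ref{exmp} is instructive: there the two matrices have \emph{different} boundary conditions ($b_1$ differs), which is exactly why matching only the spectrum fails — the hypothesis that the first diagonal entry $b$ is shared is what makes the argument go through.
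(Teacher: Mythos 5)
There is a genuine gap, and it lies in your reading of the hypothesis rather than in your algebra. In the paper's notation, $\textbf{S}_n(b,b_n)$ is a \emph{general} discrete Schr\"{o}dinger matrix whose first and last diagonal entries are $b$ and $b_n$: the middle entries $b_2,\dots,b_{n-1}$ are arbitrary unknown reals, not zero (the pair $(b,b_n)$ merely records the boundary conditions at $0$ and $n+1$; $\textbf{S}_n$ itself is $\textbf{J}_n$ with all $a_k=1$ and arbitrary diagonal). By declaring $b_2=\dots=b_{n-1}=0$ at the outset you have replaced $\textbf{S}_n(b,b_n)$ by $\textbf{F}_n(b,b_n)$, and the statement you actually prove --- that isospectrality of $\textbf{F}_n(b,b_n)$ and $\textbf{F}_n(b,0)$ forces $b_n=0$ --- already follows from comparing traces and is not the theorem. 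The content of Theorem \ref{nzbc} is that, once the boundary condition $b$ at $0$ is matched, the single spectrum of $\textbf{F}_n(b,0)$ determines the operator within the entire class of discrete Schr\"{o}dinger matrices with that boundary condition; Example \ref{exmp} shows precisely that this fails when $b$ is not matched, so the nontrivial part is ruling out nonzero $b_2,\dots,b_{n-1}$, which your argument never addresses.

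The repair is the paper's argument: keep $b_1=b$ and treat $b_2,\dots,b_n$ as unknowns. Lemma \ref{charpol} and equality of the characteristic polynomials give
\begin{equation*}
b+\sum_{i=2}^{n}b_i=b \qquad \text{and} \qquad b\sum_{j=2}^{n}b_j+\sum_{2\le i<j\le n}b_ib_j=0,
\end{equation*}
so first $\sum_{i=2}^{n}b_i=0$ and then $\sum_{2\le i<j\le n}b_ib_j=0$, whence
\begin{equation*}
\sum_{i=2}^{n}b_i^2=\Bigl(\sum_{i=2}^{n}b_i\Bigr)^2-2\sum_{2\le i<j\le n}b_ib_j=0,
\end{equation*}
forcing $b_i=0$ for every $i\in\{2,\dots,n\}$, i.e. $\textbf{S}_n(b,b_n)=\textbf{F}_n(b,0)$. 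Your sum-of-squares observation is exactly the right tool; it just has to be applied to all of $b_2,\dots,b_n$, not only to $b_n$.
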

\begin{proof}
Comparing coefficients of characteristic polynomials of \textbf{S}$_n(b,b_n)$ and \textbf{F}$_n(b,0)$ like we did in the proof of Theorem \ref{Amb1}, we get
\begin{equation}\label{coefEq}
    b+\sum_{i=2}^{n}b_i=b\qquad \text{and} \qquad
    b\sum_{j=2}^n b_j + \sum_{2\leq i < j\leq n}b_ib_j=0
\end{equation}
The first equation of (\ref{coefEq}) gives $\sum_{i=2}^{n}b_i=0$ and using this in the second equation of (\ref{coefEq}) we get $\sum_{2\leq i < j\leq n}b_ib_j = 0$. This leads us to conclude that 
\begin{equation*}
     \sum_{i=2}^{n} b_i^2 = \left(\sum_{i=2}^{n} b_i\right)^2-2\left(\sum_{2\leq i < j\leq n}b_ib_j\right)=0
\end{equation*}
which only occurs when $b_i = 0$ for each $i \in \{2,\cdots,n\}$, i.e. \textbf{S}$_n(b,b_n)$ = \textbf{F}$_n(b,0)$.
\end{proof}

Now, we approach the Ambarzumian problem with Floquet boundary conditions. Let us recall that \textbf{S}$_n(\theta)$ and \textbf{F}$_n(\phi)$ denote a discrete Schr\"odinger operator and the free discrete Schr\"odinger Operator with Floquet Boundary Conditions for the angles $0 \leq \theta < 1$ and $0 \leq \phi < 1$, respectively:

$$
\textbf{S}_n(\theta) = \begin{pmatrix}
    b_{1} & 1 & {0} & \cdots  & {e^{2\pi i\theta}} \\
    1 & b_{2} & 1 & \ddots  & {0} \\
    {0} & 1 & b_{3} & \ddots  & {0} \\
    \vdots & \ddots & \ddots & \ddots & 1 \\
    {e^{-2\pi i\theta}} & {0} & \cdots & 1  & b_{n} \\ 
\end{pmatrix},\quad
\textbf{F}_n(\phi) = \begin{pmatrix}
    0 & 1 & {0} & \cdots  & {e^{2\pi i\phi}} \\
    1 & 0 & 1 & \ddots  & {0} \\
    {0} & 1 & 0 & \ddots  & {0} \\
    \vdots & \ddots & \ddots & \ddots & 1 \\
    {e^{-2\pi i\phi}} & {0} & \cdots & 1  & 0 \\ 
\end{pmatrix}.\\[0.1in]
$$
The following theorem shows that with Floquet boundary conditions, the knowledge of the spectrum of the free operator is sufficient for the uniqueness of the operator up to transpose.

\begin{theorem}\label{Amb2}
    Suppose that \textbf{\emph{S}}$_n(\theta)$ shares all of its eigenvalues with \textbf{\emph{F}}$_n(\phi)$, including multiplicity, for $0 \leq \theta,\phi < 1$. Then $b_1=\dots=b_n=0$ and $\theta = \phi$ or $\theta=1-\phi$, i.e. \textbf{\emph{S}}$_n(\theta)$ = \textbf{\emph{F}}$_n(\phi)$ or \textbf{\emph{S}}$_n(\theta)$ = \textbf{\emph{F}}$_n^{ \mathsf{T}}(\phi)$
\end{theorem}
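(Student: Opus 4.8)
The plan is to reduce everything to a comparison of characteristic polynomials, just as in the proof of Theorem~\ref{Amb1}, once a workable closed form for the characteristic polynomial of a Floquet matrix is in hand. First I would compute $\det(x\textbf{I}_n - \textbf{S}_n(\theta))$ by treating the two corner entries $e^{\pm 2\pi i\theta}$ as a rank-two update of the tridiagonal matrix $\textbf{S}_n$ and applying the matrix determinant lemma (equivalently, a cofactor expansion along the first row and the first column). Writing $q_n(x)$ for the characteristic polynomial of $\textbf{S}_n$ and $q_{j,k}(x)$ for the characteristic polynomial of the tridiagonal block of $\textbf{S}_n$ on the indices $j,\dots,k$, this expresses $\det(x\textbf{I}_n-\textbf{S}_n(\theta))$ in terms of $q_n$, the end blocks $q_{1,n-1}$ and $q_{2,n}$, the middle block $q_{2,n-1}$, and $2\cos(2\pi\theta)$; the classical continuant identity $q_{1,n-1}(x)q_{2,n}(x) - q_n(x)q_{2,n-1}(x) = 1$ (which holds because all $a_k = 1$) then shows that the a priori rational remainder term is in fact the polynomial $-q_{2,n-1}(x)$, giving
\begin{equation*}
 \det(x\textbf{I}_n - \textbf{S}_n(\theta)) = q_n(x) - q_{2,n-1}(x) - 2\cos(2\pi\theta),
\end{equation*}
and likewise $\det(x\textbf{I}_n - \textbf{F}_n(\phi)) = p_n(x) - p_{n-2}(x) - 2\cos(2\pi\phi)$, where $q_{2,n-1}$ is monic of degree $n-2$ and $p_{n-2}$ is the characteristic polynomial of $\textbf{F}_{n-2}$.

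Since $\textbf{S}_n(\theta)$ and $\textbf{F}_n(\phi)$ are Hermitian, sharing all eigenvalues with multiplicity is the same as having equal characteristic polynomials, so the two displayed expressions agree as polynomials in $x$. I would then compare the top three coefficients: applying Lemma~\ref{charpol} to the size-$n$ matrix $\textbf{S}_n$ and again to the size-$(n-2)$ middle block with diagonal $b_2,\dots,b_{n-1}$, together with Corollary~\ref{cor1}, the coefficient of $x^{n-1}$ forces $\sum_{i=1}^n b_i = 0$ and the coefficient of $x^{n-2}$ forces $\sum_{1\le i<j\le n} b_ib_j = 0$ (the $-1$'s coming from the monic degree-$(n-2)$ middle blocks $q_{2,n-1}$ and $p_{n-2}$ cancel on equating the two sides). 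As in Theorem~\ref{Amb1} this yields $\sum_{i=1}^n b_i^2 = \big(\sum_{i=1}^n b_i\big)^2 - 2\sum_{1\le i<j\le n} b_ib_j = 0$, hence $b_1=\dots=b_n=0$.

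With all $b_i=0$ we get $q_n=p_n$ and $q_{2,n-1}=p_{n-2}$, so the polynomial identity degenerates to $\cos(2\pi\theta)=\cos(2\pi\phi)$; for $\theta,\phi\in[0,1)$ this holds precisely when $\theta=\phi$ or $\theta=1-\phi$. Finally, $\textbf{F}_n(1-\phi)$ has corner entries $e^{-2\pi i\phi}$ in position $(1,n)$ and $e^{2\pi i\phi}$ in position $(n,1)$, which is exactly $\textbf{F}_n^{\mathsf{T}}(\phi)$ because the rest of $\textbf{F}_n(\phi)$ is symmetric; this gives the two stated alternatives $\textbf{S}_n(\theta)=\textbf{F}_n(\phi)$ or $\textbf{S}_n(\theta)=\textbf{F}_n^{\mathsf{T}}(\phi)$.

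I expect the only genuine obstacle to be the first step: establishing the closed form for the Floquet characteristic polynomial with the signs and the $(-1)^n$ factors from the corner cofactors handled correctly, and invoking the continuant identity in the right form so that what looks like a rational function is recognized as $q_n - q_{2,n-1} - 2\cos(2\pi\theta)$. A transfer-matrix (discriminant) derivation is a viable alternative here, but the determinant-lemma route keeps the bookkeeping closest to Lemma~\ref{charpol}. Once that formula is available, the remainder is the same two-coefficient computation already used for Theorem~\ref{Amb1} together with the elementary trigonometric step, so no further difficulties are anticipated.
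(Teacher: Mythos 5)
Your proposal is correct, and its overall architecture is the same as the paper's: obtain the closed form $\det(x\textbf{I}_n-\textbf{S}_n(\theta))=q_n(x)-q_{2,n-1}(x)-2\cos(2\pi\theta)$ (which is exactly the paper's identity $(x-b_1)D[2,n]-D[3,n]-D[2,n-1]-e^{2\pi i\theta}-e^{-2\pi i\theta}$, since $(x-b_1)D[2,n]-D[3,n]=q_n$), equate characteristic polynomials, read off $\sum b_i=0$ and $\sum_{i<j}b_ib_j=0$ from the $x^{n-1}$ and $x^{n-2}$ coefficients via Lemma \ref{charpol} and Corollary \ref{cor1}, conclude $b_1=\dots=b_n=0$ as in Theorem \ref{Amb1}, and finish with $\cos(2\pi\theta)=\cos(2\pi\phi)$. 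The only genuine difference is how the key determinant formula is derived: the paper carries out four nested cofactor expansions and tracks the sign factors $(-1)^{n\pm1}$ by hand, whereas you treat the corners as a rank-two update, apply the matrix determinant lemma, and then use the continuant identity $D[1,n-1]\,D[2,n]-D[1,n]\,D[2,n-1]=1$ (valid since all $a_k=1$) to see that the seemingly rational correction is the polynomial $-D[2,n-1]$; both computations check out, and your route is arguably cleaner and less error-prone on signs, at the cost of invoking the determinant lemma and the continuant identity (and a brief remark that the identity, proved first for $x$ away from the spectrum of $\textbf{S}_n$, extends by polynomiality). Your closing observation that $\textbf{F}_n(1-\phi)=\textbf{F}_n^{\mathsf{T}}(\phi)$ matches the paper's conclusion, so no gaps remain.
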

\begin{proof}
     Let us define $D[k,l]$ as the following determinant of a $(l-k+1) \times (l-k+1)$ matrix for $1\leq k < l \leq n$:
    
    \begin{equation*}
        D[k,l]:=\begin{vmatrix}
            x-b_k & -1  & 0 & \cdots & 0\\
        -1 & x-b_{k+1} & \ddots & \ddots & \vdots\\
        0 & \ddots & \ddots & \ddots & 0\\
        \vdots & \ddots &  \ddots & x-b_{l-1} & -1\\
        0 & \cdots & 0 & -1 & x-b_l\\
        \end{vmatrix}_{(l-k+1)}\\[0.1in]
    \end{equation*}
    Let us consider the characteristic polynomial of \textbf{S}$_n(\theta)$ by using cofactor expansion on the first row:
     \begin{equation*}
     \begin{aligned}
        |x\textbf{I}_n-\textbf{S}_n(\theta)|&=(x-b_1)D[2,n]+\begin{vmatrix}
        -1 & -1  & 0 & \cdots & 0\\
        -1 & x-b_{3} & -1 & \ddots & \vdots\\
        0 & -1 & \ddots & \ddots & 0\\
        \vdots & \ddots &  \ddots & x-b_{n-1} & -1\\
        -e^{-2\pi i \theta} & 0 & \cdots & -1 & x-b_n\\
        \end{vmatrix}_{(n-1)}
        \\
        &\\
        &+(-1)^{n+1}\left(-e^{2\pi i \theta}\right)
        \begin{vmatrix}
            -1 & x-b_{2} & -1 & 0 & \cdots \\
        0 & -1 & x-b_{3} & \ddots & \ddots \\
        \vdots  & \ddots & \ddots & \ddots & -1\\
        0 & \cdots & 0 & -1 & x-b_{n-1}\\
        -e^{-2\pi i \theta} & 0 & \cdots & 0 & -1\\
        \end{vmatrix}_{(n-1)}
    \end{aligned}\\[0.1in]
    \end{equation*}
    Then by using cofactor expansions on the first row of the determinant in the second term and on the first column of the determinant in the third term we get
    
    \begin{equation*}
     \begin{aligned}
        |x\textbf{I}_n-\textbf{S}_n(\theta)|&=(x-b_1)D[2,n]+(-1)D[3,n]+\begin{vmatrix}
            0 & -1 & 0 & \cdots & 0\\
        0 & x-b_{4} & -1 & \ddots & \vdots \\
        0 & -1 & \ddots & \ddots & 0\\
        \vdots & \ddots & \ddots & \ddots & -1\\
        -e^{-2\pi i \theta} & 0 & \cdots & -1 & x-b_{n}\\
        \end{vmatrix}_{(n-2)}
        \\
        &\\
        &+(-1)^{n+1}\left(-e^{2\pi i \theta}\right)
        (-1)
        \begin{vmatrix}
            -1 & x-b_{3} & -1 & 0 & \cdots \\
        0 & -1 & x-b_{4} & \ddots & \ddots \\
        \vdots  & \ddots & \ddots & \ddots & -1\\
        0 & \cdots & \ddots & -1 & x-b_{n-1}\\
        0 & \cdots & \cdots & 0 & -1\\
        \end{vmatrix}_{(n-2)}
        &\\
        &+(-1)^{n+1}\left(-e^{2\pi i \theta}\right)(-1)^n\left(-e^{-2\pi i \theta}\right)D[2,n-1]
    \end{aligned}\\[0.1in]
    \end{equation*}
    Now let's use cofactor expansion on the first column of the determinant in the third term. Also note that the determinant in the fourth term is the determinant of an upper triangular matrix. Therefore,
    
    \begin{equation*}
     \begin{aligned}
        |x\textbf{I}_n-\textbf{S}_n(\theta)|&=(x-b_1)D[2,n]-D[3,n]\\
        &\\
        &+(-1)^{n-1}\left(-e^{2\pi i \theta}\right)\begin{vmatrix}
        -1 & 0 & 0 & \cdots & 0 \\
        x-b_4 & -1 & \ddots & \ddots & \vdots \\
        -1  & x-b_5 & \ddots & \ddots & 0\\
        0 & \ddots & \ddots & -1 & 0\\
        -e^{-2\pi i \theta} & 0 & -1 & x-b_{n-1} & -1\\
        \end{vmatrix}_{(n-3)}
        \\
        &\\
        &+(-1)^{n+1}\left(-e^{2\pi i \theta}\right)\left[(-1)(-1)^{n-2}+(-1)^n\left(-e^{-2\pi i \theta}\right)D[2,n-1]\right]
    \end{aligned}\\[0.1in]
    \end{equation*}
    Finally, noting again that the determinant in the third term is that of a lower triangular matrix, we get
    \begin{equation}\label{detFloquet}
     \begin{aligned}
        |x\textbf{I}_n-\textbf{S}_n(\theta)|&=(x-b_1)D[2,n]-D[3,n]
        +(-1)^{n-1}\left(-e^{2\pi i \theta}\right)(-1)^{n-3}\\
        &+(-1)^{2n}\left(-e^{-2\pi i \theta}\right)+(-1)^{2n+1}D[2,n-1]\\
        &=(x-b_1)D[2,n]-D[3,n]-D[2,n-1]
        -e^{2\pi i \theta}-e^{-2\pi i \theta}
    \end{aligned}
    \end{equation}
    At this point note that $D[k,l]$ is the characteristic polynomial of the following discrete Schr\"{o}dinger matrix
    \begin{equation*}
 \begin{pmatrix}
    b_{k} & 1 & {0} & \cdots  & {0} \\
    1 & b_{k+1} & 1 & \ddots  & \vdots \\
    {0} & 1 & \ddots & \ddots  & {0} \\
    \vdots & \ddots & \ddots & b_{l-1} &  1 \\
    {0} & \cdots & {0} & 1  & b_{l} \\ 
\end{pmatrix}.\\[0.1in]   
\end{equation*}
Therefore using Lemma \ref{charpol} and equation (\ref{detFloquet}), we obtain
    \begin{equation}\label{Acharpoly}
    \begin{aligned}
        |x\textbf{I}_n-\textbf{S}_n(\theta)|
        &=x^n-\left(\sum_{i=1}^n b_i\right)x^{n-1}+\left(\sum_{\substack{1\leq i<j\leq n}}b_ib_j-(n-1)\right)x^{n-2}\\
        &+f_{n-3}(x)-e^{2\pi i \theta}-e^{-2\pi i \theta}
    \end{aligned}
    \end{equation}
    where $f_{n-3}$ is a polynomial of degree at most $n-3$, which is independent of $\theta$. Using the same steps for $\textbf{F}_n(\phi)$, we obtain
    \begin{equation}\label{Scharpoly}
    \begin{aligned}
        |x\textbf{I}_n-\textbf{F}_n(\phi)|
        &=x^n-(n-1)x^{n-2}+g_{n-3}(x)-e^{2\pi i \phi}-e^{-2\pi i \phi}
    \end{aligned}
    \end{equation}
    where $g_{n-3}$ is a polynomial of degree at most $n-3$, which is independent of $\phi$.
    
     Comparing equations (\ref{Acharpoly}) and (\ref{Scharpoly}), like we did in the proof of Theorem \ref{Amb1}, we can conclude that the diagonal entries $\{b_i\}_{i=1}^n$ of \textbf{S}$_n(\theta)$ must be zero.
    
    Note that the expression consisting of the first three terms in the right end of (\ref{detFloquet}), $(x-b_1)D[2,n]-D[3,n]-D[2,n-1]$ is independent of $\theta$. In addition, we observed that $b_1=\dots=b_n=0$. Therefore using the equivalence of the characteristic polynomials of \textbf{S}$_n(\theta)$ and \textbf{F}$_n(\phi)$, we obtain
    $$
    e^{2\pi i \theta}+e^{-2\pi i \theta} = e^{2\pi i \phi}+e^{-2\pi i \phi},
    $$
    which can be written using Euler's identity as
    \begin{equation}\label{eulerid}
    2\cos(2\pi\theta) = 2\cos(2\pi\phi).
    \end{equation}
    Equation (\ref{eulerid}) is valid if and only if $\theta$ differs from $\phi$ or $-\phi$ by an integer. Since $0 \leq \theta,\phi < 1$, the only possible values for $\theta$ are $\phi$ and $1-\phi$. This completes the proof.
\end{proof}
 
\section{\bf {An Ambarzumian-type mixed inverse spectral problem}}

    Let us introduce the following $n \times n$ discrete Schr\"{o}dinger matrix for $1 \leq m \leq n$:
    
$$
\textnormal{\textbf{S}}_{n,m} := \begin{pmatrix}
    b_{1}  &  1   &  0   & 0 &  \ldots  &  0 \\
    1      &\ddots &  1   & 0 & \ddots  &  0 \\
    0      &  1   &  b_{m}   &  1 & \ddots  & \vdots \\
    0 & 0 & 1 &  0  &  \ddots & 0 \\
    \vdots &\ddots&\ddots&  \ddots  & \ddots &  1\\
    0      &\ldots&  \ldots & 0   &    1     &  0\\
    \end{pmatrix}\\[0.1in]
$$
    
     Let us also recall that \textbf{F}$_{n}$ denotes the free discrete Schr\"{o}dinger matrix of size $n \times n$. In this section our goal is to answer the following Ambarzumian-type mixed spectral problem positively for the $m=2$ case.
    
    \begin{problem}
 If \textbf{S}$_{n,m}$ and \textbf{F}$_{n}$ share $m$ consecutive eigenvalues, then do we get $b_1 = \cdots = b_m = 0$, i.e. \textbf{S}$_{n,m} = \textnormal{\textbf{F}}_n$?
\end{problem}

When $m=1$, this problem becomes a special case of the following result of Gesztesy and Simon \cite{GS}. For a Jacobi matrix given as (\ref{Jacmat1}), let us consider the sequences $\{a_k\}$ and $\{b_k\}$ as a single sequence $b_1,a_1,b_2,a_2,\cdots,a_{n-1},b_n = c_1,c_2,\cdots,c_{2n-1}$, i.e. $c_{2k-1} := b_k$ and $c_{2k} := a_k$. 

\begin{theorem}\emph{(}\cite{GS}, Theorem 4.2\emph{)}
Suppose that $1 \leq k \leq n$ and $c_{k+1}, \cdots ,c_{2n-1}$ are known, as
well as $k$ of the eigenvalues. Then $c_1,\cdots,c_k$ are uniquely determined.
\end{theorem}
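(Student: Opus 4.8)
The plan is to fix the ``split index'' $\ell:=\lceil k/2\rceil$ and to read off the characteristic polynomial of $\textbf{J}_n$ through the classical tridiagonal determinant identity obtained by cutting the matrix along the $a_\ell$ bond. Writing $\Delta(x):=\det(x\textbf{I}_n-\textbf{J}_n)$, and letting $A_j(x)$ and $B_j(x)$ denote the characteristic polynomials of the top-left $j\times j$ block and the bottom-right $j\times j$ block respectively, one has
\begin{equation*}
\Delta(x)=A_\ell(x)\,B_{n-\ell}(x)-a_\ell^{2}\,A_{\ell-1}(x)\,B_{n-\ell-1}(x).
\end{equation*}
The point of choosing this particular $\ell$ is bookkeeping: the entries $c_{k+1},\dots,c_{2n-1}$ assumed known are exactly the entries of the bottom-right $(n-\ell)\times(n-\ell)$ block, together with $a_\ell$ in the odd case $k=2\ell-1$. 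Hence $B_{n-\ell}$ and $B_{n-\ell-1}$ are known polynomials, while $A_\ell$ (monic of degree $\ell$) and $A_{\ell-1}$ (monic of degree $\ell-1$) carry precisely the unknowns $c_1,\dots,c_k$. A quick count shows that the unknown coefficients number exactly $k$ in both parities; in the even case $k=2\ell$ one must also regard $a_\ell^2$ as unknown, which I absorb into $C:=a_\ell^2A_{\ell-1}$.

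First I would linearize. Each of the $k$ assumed eigenvalues $\lambda_i$ satisfies $\Delta(\lambda_i)=0$, which by the identity above is a linear equation in the unknown coefficients of $A_\ell$ and of $A_{\ell-1}$ (resp.\ $C$). This yields a $k\times k$ linear system whose coefficient matrix depends only on the known data $B_{n-\ell}$, $B_{n-\ell-1}$ and $\lambda_1,\dots,\lambda_k$. Once this system is shown to have a unique solution, the genuine matrix — which is one solution — is recovered, and the head $b_1,\dots,b_\ell,a_1,\dots,a_{\ell-1}$ (together with $a_\ell$) is reconstructed from the pair $(A_\ell,A_{\ell-1})$ by the standard inverse procedure for finite Jacobi matrices, the ratio $A_{\ell-1}/A_\ell$ being the Weyl $m$-function from the last coordinate of the block; positivity of the recovered off-diagonal entries needs no separate check, since the genuine matrix already realizes the unique solution.

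The main obstacle is the nonsingularity of this linear system, i.e.\ the triviality of its homogeneous version: I must rule out a nonzero pair of polynomials $\alpha,\beta$ (of degrees $\le\ell-1$ and $\le\ell-2$) with $\alpha(\lambda_i)B_{n-\ell}(\lambda_i)=a_\ell^2\beta(\lambda_i)B_{n-\ell-1}(\lambda_i)$ for every $i$. Naive degree counting is not enough here: the relevant difference of characteristic polynomials can have degree as large as $n-1$, so vanishing at merely $k\le n$ points does not force it to vanish identically. The device that overcomes this is to feed back the genuine eigenvalue relations $A_\ell(\lambda_i)B_{n-\ell}(\lambda_i)=a_\ell^2A_{\ell-1}(\lambda_i)B_{n-\ell-1}(\lambda_i)$ coming from the true matrix. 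For each $i$, the homogeneous relation and this genuine relation assert that the nonzero vector $\big(B_{n-\ell}(\lambda_i),B_{n-\ell-1}(\lambda_i)\big)$ — nonzero because consecutive characteristic polynomials of a Jacobi block have strictly interlacing, hence disjoint, zeros, as recorded in Section 2 — lies in the kernel of a $2\times2$ matrix, whose determinant must therefore vanish. That determinant is exactly $a_\ell^2H(\lambda_i)$, where $H:=\beta A_\ell-\alpha A_{\ell-1}$. The crucial gain is that $\deg H\le k-1$, so a polynomial of degree at most $k-1$ vanishes at the $k$ distinct eigenvalues and hence $H\equiv0$. Finally, $A_\ell$ and $A_{\ell-1}$ are coprime (again by strict interlacing), so $H\equiv0$ forces $A_\ell\mid\alpha$; since $\deg\alpha<\deg A_\ell$ this gives $\alpha\equiv0$ and then $\beta\equiv0$, proving nonsingularity. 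The even case is identical after replacing $(\beta,A_{\ell-1})$ by $(\gamma,C)$, the degree bound $\deg H\le k-1$ again being what makes the argument close.

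I would carry the steps in this order: (i) the splitting identity and the parity bookkeeping that separates the known and unknown pieces; (ii) the passage to the $k\times k$ linear system; (iii) the homogeneous-system analysis via the $2\times2$ determinant and the degree-$(k-1)$ polynomial $H$; (iv) the reconstruction of the head block from $(A_\ell,A_{\ell-1})$. I expect step (iii) to be the heart of the matter, since it is precisely where the self-adjoint structure — interlacing and the resulting coprimality of consecutive determinants — replaces the failed dimension count.
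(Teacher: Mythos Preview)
The paper does not prove this theorem; it is quoted from Gesztesy--Simon \cite{GS} purely as background and is used only in the remark that the case $m=1$ of the mixed problem follows from it. There is therefore no ``paper's own proof'' to compare against.

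That said, your proposal is a correct and self-contained argument, and it is in the spirit of the original Gesztesy--Simon proof: split the characteristic polynomial across the bond $a_\ell$ with $\ell=\lceil k/2\rceil$, set up the resulting $k\times k$ affine linear system in the unknown head coefficients, and prove nonsingularity by playing the homogeneous relation against the genuine relation to force a polynomial $H=\beta A_\ell-\alpha A_{\ell-1}$ of degree at most $k-1$ to vanish at $k$ distinct points. The two ingredients you flag---the nonvanishing of $(B_{n-\ell}(\lambda_i),B_{n-\ell-1}(\lambda_i))$ and the coprimality of $A_\ell,A_{\ell-1}$---both follow from strict interlacing of consecutive Jacobi minors, which is exactly the structural input needed. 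Your handling of the even parity via $C=a_\ell^{2}A_{\ell-1}$ is clean, and your observation that positivity need not be checked separately (uniqueness suffices, existence being supplied by the genuine matrix) is the right way to finish. The final recovery of $b_1,\dots,b_\ell,a_1,\dots,a_{\ell-1}$ from the pair $(A_\ell,A_{\ell-1})$ via the $m$-function of the head block is standard and correctly invoked.
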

    
    By letting $k=1$, we get the inverse spectral problem stated above for $m=1$. Now let us prove the $m=2$ case. Let $\lambda_{1}<\lambda_{2}<\dots<\lambda_{n}$ denote the eigenvalues of \textbf{F}$_{n}$, and let $\tilde{\lambda}_1<\tilde{\lambda}_2<\dots<\tilde{\lambda}_n$ denote the
    eigenvalues of \textbf{S}$_{n,2}$.  
    
    \begin{theorem}\label{Amb3}
    Let $\lambda_{k} = \tilde{\lambda}_{k}$ and $\lambda_{k+1} = \tilde{\lambda}_{k+1}$ for some $k \in \{1,2,\dots,n-1\}$. Then $b_1 = 0$ and $b_2 = 0$, i.e. \textnormal{\textbf{S}}$_{n,2} = \textnormal{\textbf{F}}_n$.
    \end{theorem}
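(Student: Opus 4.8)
The plan is to distill the hypothesis into a single scalar identity that must hold at both $\lambda_k$ and $\lambda_{k+1}$, and then eliminate the nonzero solutions by following the ordered eigenvalues along a path of Jacobi matrices from $\textbf{F}_n$ to $\textbf{S}_{n,2}$, controlled by the Hellmann--Feynman formula.

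Let $p_m$ denote the characteristic polynomial of $\textbf{F}_m$, so $p_m(x)=xp_{m-1}(x)-p_{m-2}(x)$ with $p_0\equiv1$. Expanding $q_n(x)=\det(x\textbf{I}_n-\textbf{S}_{n,2})$ along its first row, and using that past the second diagonal entry $\textbf{S}_{n,2}$ coincides with the free operator, a routine cofactor computation collapses to $q_n(x)=(x-b_1)\bigl(p_{n-1}(x)-b_2p_{n-2}(x)\bigr)-p_{n-2}(x)$. If $\lambda$ is a common eigenvalue of $\textbf{F}_n$ and $\textbf{S}_{n,2}$, then $p_n(\lambda)=q_n(\lambda)=0$; from $p_n(\lambda)=\lambda p_{n-1}(\lambda)-p_{n-2}(\lambda)=0$ we get $p_{n-2}(\lambda)=\lambda p_{n-1}(\lambda)$, and substituting into $q_n(\lambda)=0$ gives $p_{n-1}(\lambda)\bigl[(\lambda-b_1)(1-b_2\lambda)-\lambda\bigr]=0$. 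Since $p_n$ and $p_{n-1}$ have no common zero (by the recurrence together with $p_0\equiv1$, equivalently by the strict interlacing of the spectra of $\textbf{F}_{n-1}$ and $\textbf{F}_n$ from Section 2), we may cancel $p_{n-1}(\lambda)$ to obtain
\begin{equation*}
(\lambda-b_1)(1-b_2\lambda)=\lambda,\qquad\text{i.e.}\qquad b_2\lambda^2-b_1b_2\lambda+b_1=0,\tag{$\ast$}
\end{equation*}
valid for $\lambda=\lambda_k$ and $\lambda=\lambda_{k+1}$.

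Next I would argue by cases on $(\ast)$. If $b_2=0$, then $(\ast)$ forces $b_1=0$; if $b_1=0$, then $(\ast)$ reads $b_2\lambda^2=0$, and since $\lambda_k\neq\lambda_{k+1}$ at least one of them is nonzero, so $b_2=0$. It remains to contradict $b_1\neq0$ and $b_2\neq0$. In that case neither $\lambda_k$ nor $\lambda_{k+1}$ equals $0$ (plugging $\lambda=0$ into $(\ast)$ would give $b_1=0$), so $b_2x^2-b_1b_2x+b_1$ is a quadratic with the two distinct roots $\lambda_k,\lambda_{k+1}$; comparing coefficients yields $b_1=\lambda_k+\lambda_{k+1}$ and $b_1=b_2\lambda_k\lambda_{k+1}$. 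To exclude this, introduce the path $A(t):=\textbf{F}_n+tb_1(\delta_1,\cdot)\delta_1+tb_2(\delta_2,\cdot)\delta_2$, $t\in[0,1]$. Each $A(t)$ is a Jacobi matrix, hence has $n$ simple eigenvalues $\lambda_1(t)<\dots<\lambda_n(t)$; these branches never collide, so by Theorems \ref{smth1} and \ref{smth2} each $\lambda_j(t)$ is $C^1$ on $[0,1]$ with a smoothly varying unit eigenvector $X_j(t)$, and $\lambda_j(0)=\lambda_j$, $\lambda_j(1)=\tilde\lambda_j$. Since $A'(t)=b_1(\delta_1,\cdot)\delta_1+b_2(\delta_2,\cdot)\delta_2$, Theorem \ref{hf} gives $\lambda_j'(t)=b_1\bigl(X_j(t)\bigr)_1^2+b_2\bigl(X_j(t)\bigr)_2^2$, where $\bigl(X_j(t)\bigr)_1\neq0$ because the first entry of a Jacobi eigenvector cannot vanish.

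The decisive step is the sign analysis. If $\lambda_k\lambda_{k+1}>0$, then $b_1=\lambda_k+\lambda_{k+1}$ and $b_2=b_1/(\lambda_k\lambda_{k+1})$ have the same nonzero sign, so $\lambda_k'(t)$ keeps that sign on all of $[0,1]$, $t\mapsto\lambda_k(t)$ is strictly monotone, and therefore $\tilde\lambda_k=\lambda_k(1)\neq\lambda_k(0)=\lambda_k$, contradicting the hypothesis. If $\lambda_k\lambda_{k+1}<0$, then $\lambda_k<0<\lambda_{k+1}$; but conjugating by $\operatorname{diag}(1,-1,1,\dots)$ turns $\textbf{F}_n$ into $-\textbf{F}_n$, so the spectrum of $\textbf{F}_n$ is symmetric about $0$. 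This forces $n$ to be even and $k=n/2$ (when $n$ is odd, $0$ is an eigenvalue of $\textbf{F}_n$ sitting between the negative and positive ones, so no consecutive pair straddles $0$), whence $b_1=\lambda_{n/2}+\lambda_{n/2+1}=0$, again a contradiction. Hence $b_1=b_2=0$, i.e. $\textbf{S}_{n,2}=\textbf{F}_n$. I expect the bookkeeping in the cofactor expansion leading to $(\ast)$ to be the most delicate routine part; conceptually, the subtle point is that the hypothesis fixes the $k$-th and $(k+1)$-th eigenvalues of the two matrices \emph{as ordered lists}, which is exactly what makes the Hellmann--Feynman monotonicity argument applicable --- mere coincidence of two eigenvalue values would not suffice, since a rank-two perturbation can reshuffle where the spectrum sits.
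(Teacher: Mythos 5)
Your proposal is correct, and its skeleton matches the paper's: the same cofactor identity (your $q_n=(x-b_1)(p_{n-1}-b_2p_{n-2})-p_{n-2}$ is literally the paper's expansion $[(x-b_1)(x-b_2)-1]p_{n-2}-(x-b_1)p_{n-3}$ after one use of the recurrence), the same quadratic constraint at the two shared eigenvalues leading to the dichotomy $b_1=b_2=0$ or $b_1=\lambda_k+\lambda_{k+1}$, $b_2=(\lambda_k+\lambda_{k+1})/(\lambda_k\lambda_{k+1})$, and the same tools (Lax's smoothness theorems plus Hellmann--Feynman) to kill the second case. The differences are in execution, and in both places your version is arguably tidier. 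First, you derive $(\ast)$ by substituting $p_{n-2}(\lambda)=\lambda p_{n-1}(\lambda)$ and cancelling $p_{n-1}(\lambda)\neq0$ (justified by interlacing), whereas the paper divides by $(\lambda-b_1)$ and by $\det(\lambda I-\textbf{F}_{n-2})$ and compares the resulting ratio with the free case, which leaves implicit that these quantities are nonzero and that the zero-eigenvalue situation makes $b_2$ ``undefined''; your treatment of $\lambda=0$ through $(\ast)$ itself is cleaner. Second, for the endgame the paper does not flow directly to $\textbf{S}_{n,2}$: it first replaces it by the comparison matrix $\textbf{C}_n$ with equal first two diagonal entries $b_2,b_2$, uses the min--max inequality $\lambda_k(\textbf{C}_n)\geq\tilde\lambda_k$ (after a WLOG ordering of $b_1,b_2$ and a sign normalization), and then applies Hellmann--Feynman along the one-parameter family $\textbf{M}_n(t)$ joining $\textbf{F}_n$ to $\textbf{C}_n$, needing only that one of the first two eigenvector components is nonzero. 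You instead move along the straight-line rank-two path from $\textbf{F}_n$ to $\textbf{S}_{n,2}$ and get strict monotonicity of the $k$-th ordered eigenvalue from the fact that $b_1,b_2$ share a sign, at the price of invoking the (standard, and correctly argued) fact that the first entry of a Jacobi eigenvector never vanishes; this avoids the auxiliary matrix and the min--max comparison altogether and gives the contradiction $\lambda_k(1)\neq\lambda_k(0)$ directly. Your handling of the sign cases (ruling out a consecutive pair straddling $0$ unless $n$ is even and $k=n/2$, where $b_1=0$) is equivalent to the paper's discussion and complete.
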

    
    \begin{proof}
    For simplicity, let us use \textbf{S}$_{n}$ instead of \textbf{S}$_{n,2}$.
    We start by proving the following claim.\\
    
    \underline{\textbf{Claim:}} If $\lambda_k = \tilde{\lambda}_{k}$ and $\lambda_{k+1} = \tilde{\lambda}_{k+1}$ , then either $b_{1} = b_{2} = 0$ or $b_{1} = \lambda_{k}+\lambda_{k+1}$ and $b_{2} = 1/\lambda_{k}+1/\lambda_{k+1}$.\\

Let us consider the characteristic polynomial of \textbf{S}$_{n}$ using cofactor expansion on the last row of $\lambda I-\textnormal{\textbf{S}}_{n}$.
\begin{align*}
    &\det(\lambda I-\textnormal{\textbf{S}}_{n}) = \\
    &(\lambda - b_{1}) 
    \begin{vmatrix}
    \lambda -b_{2}&  -1  &  0   &  \ldots  &  0\\
    -1     &\lambda& -1   &  \ddots  &  \vdots\\
    0      & -1   &\lambda&  \ddots  &  0\\
    \vdots &\ddots&\ddots&  \ddots  &  -1\\
    0      &\ldots&  0   &   -1     &  \lambda\\
    \end{vmatrix}_{(n-1)}+\quad
    \begin{vmatrix}
    -1     &  -1  &  0   &  \ldots  &  0\\
    -1     &\lambda& -1   &  \ddots  &  \vdots\\
    0      & -1   &\lambda&  \ddots  &  0\\
    \vdots &\ddots&\ddots&  \ddots  &  -1\\
    0      &\ldots&  0   &   -1     &  \lambda\\
    \end{vmatrix}_{(n-1)}
\end{align*}

Using cofactor expansion on the first row for the first term and the first column for the second term, we get
\begin{align*}
    \det(\lambda I-\textnormal{\textbf{S}}_{n}) &=
    (\lambda - b_{1})(\lambda - b_{2})
    \det(\lambda I-\textnormal{\textbf{F}}_{n-2})\\
    &+(\lambda -b_{1})
    \begin{vmatrix}
    -1     &  -1  &  0   &  \ldots  &  0\\
    0    &\lambda& -1   &  \ddots  &  \vdots\\
    0      & -1   &\lambda&  \ddots  &  0\\
    \vdots &\ddots&\ddots&  \ddots  &  -1\\
    0      &\ldots&  0   &   -1     &  \lambda\\
    \end{vmatrix}_{(n-2)}
    -
    \det(\lambda I-\textnormal{\textbf{F}}_{n-2})
\end{align*}

Finally, using cofactor expansion on the first column of the second term, we get
\begin{equation}\label{eveqn}
    \det(\lambda I-\textnormal{\textbf{S}}_{n}) = [(\lambda - b_{1})(\lambda - b_{2})-1]
    \det(\lambda I-\textnormal{\textbf{F}}_{n-2})-
    (\lambda -b_{1})\det(\lambda I-\textnormal{\textbf{F}}_{n-3}).
\end{equation}

Since $\tilde{\lambda}_{k}=\lambda_{k}$ and $\tilde{\lambda}_{k+1}=\lambda_{k+1}$, right hand side of (\ref{eveqn}) is zero when $\lambda = \lambda_{k}$ or $\lambda = \lambda_{k+1}$. Therefore for $\lambda = \lambda_{k}$ or $\lambda = \lambda_{k+1}$ we get

\begin{equation}\label{cpeq}
    \frac{(\lambda -b_{1})(\lambda -b_{2})-1}{\lambda - b_{1}} = 
    \frac{\det(\lambda I-\textnormal{\textbf{F}}_{n-3})}{\det(\lambda I-\textnormal{\textbf{F}}_{n-2})}.
\end{equation} 

Note that equation (\ref{cpeq}) is also valid for $\textnormal{\textbf{F}}_{n}$, i.e. when $b_1=b_2=0$, and the right hand side of the equation does not depend on $b_{1}$ or $b_{2}$ and hence identical for $\textnormal{\textbf{S}}_{n}$ and $\textnormal{\textbf{F}}_{n}$. Therefore the left hand side of (\ref{cpeq}) should also be identical for $\textnormal{\textbf{S}}_{n}$ and $\textnormal{\textbf{F}}_{n}$, when $\tilde{\lambda}_{k}=\lambda_{k}$ and $\tilde{\lambda}_{k+1}=\lambda_{k+1}$. Hence, 
\begin{equation}
    \frac{(\lambda -b_{1})(\lambda -b_{2})-1}{\lambda - b_{1}} = \frac{(\lambda -0)(\lambda -0)-1}{\lambda - 0} 
\end{equation} for $\lambda = \lambda_{k}$ or $\lambda = \lambda_{k+1}$. Therefore, 
\begin{align*}
    \lambda (\lambda -b_{1})(\lambda -b_{2}) - \lambda &= 
    (\lambda^{2}-1)(\lambda-b_{1})\\
    \lambda^{3}-(b_{1}+b_{2})\lambda^{2}+b_{1}b_{2}\lambda - \lambda
    &= \lambda^{3} -b_{1}\lambda^{2} - \lambda + b_{1}\\
-b_{2}\lambda^{2}+b_{1}b_{2}\lambda - b_{1} &= 0 
\end{align*}
for $\lambda = \lambda_{1}$ or $\lambda = \lambda_{2}$. If $b_{2} = 0$, then $b_{1} = 0$ from the last equation above, so we can assume $b_{2} \neq 0$. Then $\lambda^{2}-b_{1}\lambda + b_{1}/b_{2} = 0$ for $\lambda = \lambda_{k}$ or $\lambda = \lambda_{k+1}$.

Since $x^{2}-b_{1}x+b_{1}/b_{2}$ is a monic polynomial with two distinct roots $x=\lambda_{k}$ and $x=\lambda_{k+1}$, we get

\begin{equation*}
    x^{2}-b_{1}x+b_{1}/b_{2} = (x-\lambda_{k})(x-\lambda_{k+1}) 
\end{equation*} which implies
\begin{equation*}
    x^{2}-b_{1}x+b_{1}/b_{2} =
    x-(\lambda_{k}+\lambda_{k+1})x + \lambda_{k}\lambda_{k+1}
\end{equation*}
Comparing coefficients we get our claim, since $b_{1} = \lambda_{k}+\lambda_{k+1}$, and $b_{1}/b_{2} = \lambda_{k}\lambda_{k+1}$ implies 
$$b_2 = \frac{b_{1}}{\lambda_{k}\lambda_{k+1}} = \frac{\lambda_{k}+ \lambda_{k+1}}{\lambda_{k}\lambda_{k+1}} = \frac{1}{\lambda_{k}}+\frac{1}{\lambda_{k+1}}
$$

Now our goal is to get a contradiction for the second case of the claim, i.e. when 
$b_{1} = \lambda_{k}+\lambda_{k+1}$ and $b_2 = 1/\lambda_{k}+1/\lambda_{k+1}$, so let us assume 
$$
b_{1} = \lambda_{k}+\lambda_{k+1} \quad \text{and} \quad b_2 = 1/\lambda_{k}+1/\lambda_{k+1}.
$$

First let us show that $b_1$ and $b_2$ have the same sign. If $n$ is even and $k = n/2$, then $\lambda_k = -\lambda_{k+1}$. Hence $b_1 = b_2 = 0$. If $n$ is odd and $k = (n-1)/2$ or $k = (n+1)/2$, then one of the eigenvalues $\lambda_k$ or $\lambda_{k+1}$ is zero, so $b_2$ is undefined. For all other values of $k$, two consecutive eigenvalues $\lambda_k$ and $\lambda_{k+1}$ and hence $b_1$ and $b_2$ have the same sign.

Without loss of generality let us assume both $\lambda_k$ and $\lambda_{k+1}$ are negative and $b_1 \leq b_2$. Let us define the matrices $\textnormal{\textbf{C}}_{n}$ and $\textnormal{\textbf{M}}_{n}(t)$ with the real parameter $t$ as follows:

$$
\textnormal{\textbf{C}}_{n} := \begin{pmatrix}
    b_{2}  &  1 & 0    &  \ldots  &  0\\
    1      &b_{2} & 1  &  \ddots  &  \vdots\\
    0      &  1 & 0    &  \ddots  &  0\\
    \vdots &\ddots &\ddots &  \ddots  &  1\\
    0      &\ldots  & 0 &  1     &  0\\
    \end{pmatrix} \quad \text{and} \quad
\textnormal{\textbf{M}}_{n}(t) := \begin{pmatrix}
    -t      &  1   &  0   &  \ldots  &  0\\
    1      &  -t  &  1   &  \ddots  &  \vdots\\
    0      &  1   &  0   &  \ddots  &  0\\
    \vdots &\ddots&\ddots&  \ddots  &  1\\
    0      &\ldots&  0   &    1     &  0\\
    \end{pmatrix}.\\[0.1in]
$$

Note that kth eigenvalue of $\textnormal{\textbf{C}}_n$, denoted by $\lambda_k(\textnormal{\textbf{C}}_n)$, is greater than or equal to $\tilde{\lambda}_k$, since $\textnormal{\textbf{C}}_n \geq \textnormal{\textbf{A}}_n$. Let us also note that $\textnormal{\textbf{M}}_n(-b_2) = \textnormal{\textbf{C}}_n$ and $\textnormal{\textbf{M}}_n(0) = \textnormal{\textbf{F}}_n$. Let us denote the kth eigenvalue of $\textnormal{\textbf{M}}_n(t)$ by $\lambda_k(t)$ and the corresponding eigenvector by $X(t)$, normalized as $||X(t)||=1$. Since $\textnormal{\textbf{M}}_n(t)$ is a smooth function of $t$ around $0$, same is true for $\lambda_k(t)$ and $X(t)$ by Theorem \ref{smth1} and Theorem \ref{smth2}. Let us recall that $\textnormal{\textbf{M}}_n(t)$ is self-adjoint, $||X(t)|| = 1$ and $\textnormal{\textbf{M}}_n(t)X(t) = \lambda_k(t)X(t)$. Therefore by  Theorem \ref{hf}, the Hellmann-Feynman Theorem, we get 
\begin{equation}\label{derofev}
\lambda_k'(t) = \langle X(t),\textnormal{\textbf{M}}_n'(t)X(t) \rangle = -X_1^2(t) - X_2^2(t),
\end{equation}
where $X(t)^T = [X_1(t),X_2(t),\cdots,X_n(t)]$. Since $X(t)$ is a non-zero eigenvector of the tridiagonal matrix $\textnormal{\textbf{M}}_n(t)$, at least one of $X_1(t)$ and $X_2(t)$ is non-zero. Therefore by equation (\ref{derofev}), there exists an open interval $I \subset \mathbb{R}$ containing $0$ such that $\lambda_k'(t) ~\textless~ 0$ for $t \in I$, i.e. $\lambda_k(t)$ is decreasing on $I$. This implies existence of $0 ~\textless~ t_0 ~\textless~ -b_2$ satisfying 
$$
\lambda_k ~\textless~ \lambda_k(t_0) \leq \lambda_k(-b_2) = \lambda_k(C_n) \leq \tilde{\lambda}_k.
$$
This contradicts with our assumption that $\lambda_k = \tilde{\lambda}_k$. Therefore only the first case of the claim is true, i.e. $b_1 = b_2 = 0$ and hence $\textnormal{\textbf{S}}_n = \textnormal{\textbf{F}}_n$.
\end{proof}

\section*{Acknowledgement}
The authors would like to thank Wencai Liu for introducing them this project and his constant support. This work was partially supported by NSF DMS-2015683 and DMS-2000345. 

\bibliography{main}

\end{document}